\documentclass[11pt]{amsart}


\textwidth 6.1in
\oddsidemargin 0.2in
\evensidemargin 0.2in
\setlength{\parskip}{3ex}


\usepackage{latexsym}
\usepackage{amsmath}
\usepackage{amssymb}


\title[Linear Artin Approximation]{On The Linearity Of Artin Functions}
\author{Trung T. Dinh}


\newtheorem{theorem}{Theorem}[section]
\newtheorem{lemma}[theorem]{Lemma}

\theoremstyle{definition}
\newtheorem{definition}[theorem]{Definition}
\newtheorem{remark}[theorem]{Remark}


\newcommand{\rad}{\text{rad}}
\newcommand{\supp}{\text{supp}}

\newcommand{\mbf}{\mathbf}
\newcommand{\mf}{\mathfrak}


\begin{document}
\maketitle


\begin{abstract} It was proved by Elkik that, under some smoothness conditions, the Artin
functions of systems of polynomials over a Henselian pair are
bounded above by linear functions. This paper gives a stronger form
of this result for the class of excellent Henselian local rings. The
linearity of Artin functions of systems of polynomials in one
variable is also studied. Explicit calculations of Artin functions
of monomials and determinantal ideals are also included.
\end{abstract}


\section{Introduction}


Throughout this paper, all rings are Noetherian commutative with identity.
Let $R$ be such a ring and $\mathbf{f}=(f_1,\ldots,
f_r)$ a system of polynomials in $R[X_1,\ldots,X_N]$. For an
$R$-algebra $S$ and an element $\mbf{a}=(a_1,\ldots,a_N)\in S^N$,
we use the notation $\mathbf{f}(\mbf{a})$ to denote the set
$\{f_1(\mbf{a}),\ldots, f_r(\mbf{a})\}$ of elements in $S$. By
abuse of notation we shall often identify $\mbf{f}$ with the ideal
generated by $f_1,\ldots,f_r$, and $\mbf{f(a)}$ with the ideal
generated by $f_1(\mbf{a}),\ldots,f_r(\mbf{a})$ when it is clear
that there is no room for confusion.


\begin{definition}({\it Artin approximation property}) Let $R$
be a commutative ring, $I\subset R$ an ideal. Let $\widehat{R}$ be
the completion of $R$ with respect to the $I$-adic topology. Let
$\mbf{f}=(f_1,\ldots,f_r)\subseteq R[X_1,\ldots,X_N]$ be a system of
polynomials. We say that the system $\mbf{f}$ has the Artin
approximation property (AP) with respect to $(R,I)$ if for each
positive integer $c$ and for each
$\mbf{\widehat{a}}=(\widehat{a}_1,\ldots,\widehat{a}_N)\in
\widehat{R}^N$ with $\mbf{f}(\widehat{\mbf{a}})=0$, there is
$\mbf{a}=(a_1,\ldots,a_N)\in R^N$ such that $\mbf{f}(\mbf{a})=0$ and
$\mbf{\widehat{a}}\equiv\mbf{a}$ (mod $I^c\widehat{R}^N$).

If all systems of polynomials over $R$ have AP then we say that
$(R,I)$ has AP, or that $(R, I)$ is an approximation ring. When
$(R,\mathfrak{m})$ is a local ring, we will say that $R$ has AP if
$(R,\mathfrak{m})$ has.
\end{definition}


It is well-known that to show that the ring $R$ has AP it suffices to show
that  any system of polynomial equations over $R$
is solvable whenever it is solvable over $\widehat{R}$.


\begin{definition}({\it Strong Artin approximation property and
Artin functions}) Let $R, I, \widehat{R}$ and $\mbf{f}$ be as in the
above definition. We say that the system $\mbf{f}$ has the strong
Artin approximation property (SAP) with respect to $(R,I)$ if
there exists a sequence $\{\beta_n\}_{n\geq 1}$ of positive integers with
the following property. For each positive integer $n$ and for each
$\mbf{a}=(a_1,\ldots,a_N)\in R^N$ such that
$\mbf{f}(\mbf{a})\subseteq I^{\beta_n}$,
there is $\mbf{b}=(b_1,\ldots,b_N)\in R^N$ such that $\mbf{f}(\mbf{b})=0$
\linebreak and  $\mbf{a}\equiv\mbf{b} \text{ (mod } I^nR^N)$.
\end{definition}


If all systems of polynomials over $R$ have SAP then we say $(R,I)$
has SAP. When $(R,\mathfrak{m})$ is a local ring, we will say that
$R$ has SAP if $(R,\mathfrak{m})$ has. If for each $n\in \mathbb{N}$
we choose $\beta_n$ to be smallest possible then the function
$\beta(\mbf{f})\colon \mathbb{N}\rightarrow\mathbb{N}, n\mapsto \beta_n$, is called the Artin function
(associated to the system
$\mbf{f}$). This function, if exists, depends only on the
ideal generated by $f_1,\ldots,f_r$.

It has been proven
(\cite{artin1},\cite{artin2},\cite{artin3},\cite{ogoma},\cite{pfisterpopescu},\cite{popescu1},\cite{popescu2},\cite{rotthaus1},\cite{rotthaus2},\cite{spiva1},\cite{swan})
that a Noetherian local ring $(R,\mathfrak{m})$ has AP if and only
if it has SAP, and if and only if it is excellent and Henselian.

In \cite{spiva2} Spivakovsky announced a theorem, referred to as the
Linear Artin Approximation Theorem, without proof however, asserting
that for several classes of local rings (basically these are rings
for which the existence of Artin functions is verified), the
Artin function associated to any system of polynomial equations is
bounded above by a linear function. Since there was no proof
provided, the question on the linearity of Artin functions is then of interest.
Not much work in this direction however has
been published so far.

The first result was perhaps due to Greenberg (\cite{greenberg}) who
proved that Henselian discrete valuation rings, under some
separability conditions, have the linear approximation property,
though by that time this term had not been used. Elkik
(\cite{elkik}) studied this question in a more general set up,
namely on Noetherian Henselian pairs. During the course of the proof
of the main theorem in \cite{delfino} (see also \cite{delfino'}), the authors
proved the linearity of the Artin function for a certain system of a
single polynomial equation. Explicit calculations of linear bounds
for the Artin functions of hypersufaces with isolated singularity
over a power series ring in one variable appeared in papers by
Lejeune-Jalabert (\cite{lejeune}) and Hickel (\cite{hickel}).
However recently Rond, in \cite{rond1}, \cite{rond2},
provided two counterexamples to the above claim of Spivakovsky. Thus
the Artin functions in general are not bounded by linear functions.

This paper is concerned with the linearity of Artin functions. The
main result of the paper shows the linearity of the Artin functions under some smoothness conditions,
 which strengthens a result of Elkik for the class of excellent Henselian local rings. Its proof is carried
 out in Section 2. The third section deals with the Artin functions of systems of polynomials in one
 variable. Explicit calculations of Artin functions of monomial and determinantal
ideals will be studied in the last section.


\section{Main Theorem}


As mentioned in the introduction, this section is motivated by the
work of Elkik in \cite{elkik}. We need the following definitions in order to state the theorem of
 Elkik that inspired the main theorem of this section.


\begin{definition}(Elkik~\cite{elkik}) Let $R$ be a commutative
ring, $\mbf{f}=(f_1,\ldots,f_r)$ a system of polynomials in
$R[X_1,\ldots,X_N]$. Whenever $1\leq i_1 < \cdots < i_p\leq r$
denote by $\Delta (i_1,\ldots,i_p)$ the ideal generated by the
$p$-minors of the Jacobian matrix $\big (\frac{\partial
f_{i_j}}{\partial X_{k}}\big )_{j=1,\ldots,p}^{k=1,\ldots,N}$. Then we
define
$$
H_{\mbf{f}/R}=\sum_{1\leq i_1<\cdots < i_p\leq r}\Delta (i_1,\ldots, i_p)\big((f_{i_1},\ldots,f_{i_p}):_R(f_1,\ldots,f_r)\big).
$$
\end{definition}


It is well-known (\cite{elkik}) that $H_{\mbf{f}/R}$ is the smooth
locus of the quotient ring
$R[X_1,\ldots,X_N]/(\mbf{f})$ over $R$.

Recall that a pair $(R,I)$ with $I\subseteq \rad(R)$ is called a
Henselian pair if it satisfies the Hensel's Lemma. There are many
other equivalent definitions of Henselian pairs, and the one that is
most useful in the studies of Artin approximation is the following.


\begin{definition} A pair $(R, I)$ with $I\subseteq\rad(R)$ is called a Henselian pair if it satisfies the
following property: for each system of polynomials
$\mbf{f}=(f_1,\ldots,f_r)\subseteq R[X_1,\ldots,X_r]$, and for each
$\mbf{a}=(a_1,\ldots,a_r)\in R^r$ such that
$\mbf{f}(\mbf{a})\subseteq I$ and the determinant of the Jacobian
matrix $\big(\frac{\partial f_i}{\partial
X_j}(\mbf{a})\big)_{i,j=1,\ldots, r}$ is a unit, then there is
$\mbf{b}=(b_1,\ldots,b_r)\in R^r$ such that $\mbf{f}(\mbf{b})=0$ and
$\mbf{a}\equiv\mbf{b}$ mod $IR^r$.
\end{definition}


Now we can state the following theorem of Elkik.


\begin{theorem}\label{elkik2}\cite[Theorem 2, Section II]{elkik} Let $(R,I)$ be a Henselian pair. Then for
each integer $h$ there is a pair $(n_0,r)$ with the following
property. For each system of polynomials
$\mbf{f}=(f_1,\ldots,f_r)\subset R[X_1,\ldots,X_N]$, for each
$n>n_0$ and each $\mbf{a}=(a_1,\ldots,a_N)\in R^N$ satisfying
\begin{eqnarray*}
\mbf{f}(\mbf{a})&\subseteq& I^n\\
H_{\mbf{f}/R}(\mbf{a})&\supseteq& I^h,
\end{eqnarray*}
there is $\mbf{b}=(b_1,\ldots,b_N)\in R^N$ satisfying $\mbf{f}(\mbf{b})=0$
 and $\mbf{a}\equiv\mbf{b}$ mod $I^{n-r}R^N$.
\end{theorem}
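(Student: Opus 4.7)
My plan is to follow Elkik's approach of combining a Newton-type iteration with the Henselian lifting property, using the ``approximate smoothness'' encoded in $H_{\mbf{f}/R}(\mbf{a})\supseteq I^h$. The overall aim is to linearize the problem at the approximate solution $\mbf{a}$, cash in the Jacobian-ideal hypothesis to find an ``almost invertible'' derivative, and then apply a strong form of Hensel's lemma adapted to the Henselian pair $(R,I)$.

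The first step is structural. The hypothesis $H_{\mbf{f}/R}(\mbf{a})\supseteq I^h$ means there exist finitely many smoothing data $(\delta_\alpha, g_\alpha)$ --- where $\delta_\alpha$ is a $p_\alpha$-minor of a Jacobian submatrix $\big(\partial f_{i_j}/\partial X_k\big)$ and $g_\alpha\in(f_{i_1},\ldots,f_{i_{p_\alpha}}):(f_1,\ldots,f_r)$ --- whose evaluated products $\delta_\alpha(\mbf{a})g_\alpha(\mbf{a})$ generate an ideal containing $I^h$. A standard reduction lets us focus on one such datum $(\delta,g)$ at a time with $\delta(\mbf{a})g(\mbf{a})\in I^h$, building the exact solution by successive corrections, each governed by one smoothing datum.

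Fixing a single datum, say $\delta=\det\big(\partial f_{i_j}/\partial X_{k_l}\big)_{j,l=1,\ldots,p}$, pass to the \emph{square} subsystem $\mbf{F}=(f_{i_1},\ldots,f_{i_p})$ viewed as equations in the variables $(X_{k_1},\ldots,X_{k_p})$ only, with the other coordinates held fixed at the corresponding entries of $\mbf{a}$. The Jacobian determinant of this subsystem at the truncated point is precisely $\delta(\mbf{a})$. The colon relation $g\cdot f_j\in(f_{i_1},\ldots,f_{i_p})$ in $R[X_1,\ldots,X_N]$ yields polynomial identities $g f_j=\sum_k u_{jk}f_{i_k}$, which will let us transfer a solution of the subsystem back to the full system. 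Now invoke the strong ``smooth Hensel'' lemma for Henselian pairs: if $\mbf{F}(\mbf{a})\subseteq \delta(\mbf{a})^2\, I^{c+1}$, then there exists $\mbf{b}$ with $\mbf{F}(\mbf{b})=0$ and $\mbf{a}\equiv\mbf{b}\pmod{\delta(\mbf{a})\, I^{c+1} R^p}$. Since $\mbf{F}(\mbf{a})\subseteq I^n$ and $\delta(\mbf{a})$ has $I$-adic content bounded linearly in $h$, this applies as soon as $n$ exceeds roughly $2h+c$, producing a candidate $\mbf{b}$ exact on the chosen subsystem.

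The main obstacle --- and the subtlest part of the argument --- is promoting this candidate $\mbf{b}$ to an honest solution of the \emph{full} system. The colon identity gives only $g(\mbf{b})\, f_j(\mbf{b})=0$, and $g(\mbf{b})$ need not be a non-zero-divisor, so $f_j(\mbf{b})=0$ cannot be read off directly. The remedy, in Elkik's spirit, is to enrich the situation with an auxiliary variable $T$ and to work with the enlarged system on a suitable Henselian étale extension on which $Tg-1$ vanishes, realizing $g$ as invertible locally; equivalently, one iterates the smoothing-datum correction across the generators of $I^h$, absorbing the factor $g(\mbf{b})$ into the residual error and closing the iteration via the Henselian property. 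Tracking the precision lost at each stage --- one factor from inverting $\delta(\mbf{a})$, one from the colon transfer, and a final one from the auxiliary étale enlargement --- one verifies that all losses are linear in $h$, yielding explicit $n_0=n_0(h)$ and $r=r(h)$ for which $\mbf{a}\equiv\mbf{b}\pmod{I^{n-r}R^N}$ and $\mbf{f}(\mbf{b})=0$.
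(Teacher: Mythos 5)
First, note that the paper does not prove this statement at all: it is quoted verbatim from Elkik, so the only honest comparison is with Elkik's own argument, and against that (or on its own terms) your sketch has two genuine gaps. The first is the ``standard reduction'' to a single smoothing datum. The hypothesis $H_{\mbf{f}/R}(\mbf{a})\supseteq I^h$ only says that the \emph{sum} of the ideals generated by the various products $\delta_\alpha(\mbf{a})g_\alpha(\mbf{a})$ contains $I^h$; no single product need contain any power of $I$ (already with $I=(t_1,t_2)$ one minor may account for $t_1^h$ and another for $t_2^h$). So the inclusion $I^{2h}\subseteq(\delta(\mbf{a})^2)$ that you implicitly use to verify the Tougeron-type hypothesis $\mbf{F}(\mbf{a})\subseteq\delta(\mbf{a})^2I^{c+1}$ is simply unavailable, and the phrase ``$\delta(\mbf{a})$ has $I$-adic content bounded linearly in $h$'' has no meaning in a general ring. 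Moreover ``successive corrections, each governed by one smoothing datum'' is not a mechanism: a later correction destroys the exactness achieved by an earlier one, and since $R$ is only Henselian, not $I$-adically complete, there is no limit process to make an infinite iteration converge. Handling the ideal $H_{\mbf{f}/R}(\mbf{a})$ collectively rather than one generator at a time is exactly the hard content of Elkik's theorem (his proof goes through the key lemma for rings complete with respect to a principal ideal --- quoted in this paper as Lemma~2.9 --- together with a genuine d\'evissage, not through a minor-by-minor square-system argument).

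The second gap is the one you yourself flag: from $\mbf{F}(\mbf{b})=0$ and the colon identity you only get $g(\mbf{b})f_j(\mbf{b})=0$, and neither proposed remedy is a proof. Inverting $g$ on a Henselian \'etale extension (or adjoining $Tg-1$) changes the ring, and you give no descent argument returning a solution in $R^N$ congruent to $\mbf{a}$ modulo a power of $I$ of the original pair; note $g(\mbf{b})$ may lie in every maximal ideal containing $I$, so no localization at $g$ is compatible with the pair $(R,I)$. The alternative of ``absorbing $g(\mbf{b})$ into the residual error and closing the iteration via the Henselian property'' again has no convergence mechanism and, crucially, no control showing that the accumulated loss of precision depends on $h$ alone: the theorem asserts that $(n_0,r)$ is uniform over all systems $\mbf{f}$ and all $N$, and your bookkeeping (number of smoothing data, degrees in the colon identities) would a priori depend on $\mbf{f}$. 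As it stands the sketch establishes the result only in the special case where a single product $\delta(\mbf{a})g(\mbf{a})$ already contains $I^h$ \emph{and} $g(\mbf{a})$ is invertible, which is far from the stated theorem.
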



Roughly speaking, this theorem assures the linearity of the Artin functions whenever the smooth
locus of the quotient ring $R[X_1,...,X_N]/(\mbf{f})$ is ``big''. By restricting ourselves to the
class of excellent Henselian local rings we obtain a stronger result, which is the main theorem of this section.


\begin{theorem}\label{main}
Let $(R,\mathfrak{m})$ be an excellent Henselian local ring of
dimension $d$ containing a field. Let $\mbf{f}=(f_1, \ldots, f_r)$
be a system of polynomials in $R[X_1,\ldots, X_N]$. Let
$t_1,\ldots,t_{d-1}$ be elements in $R$ such that  $t_i$ is not in
any minimal prime of $(t_1,\ldots,t_{i-1})$ for all $i\geq 1$.  Then
there is a linear function $\beta:
\mathbb{N}\longrightarrow\mathbb{N}$ with the following property:
for each $n\in\mathbb{N}$ and each $\mbf{a}=(a_1,\ldots,a_N)\in R^N$
satisfying
\begin{eqnarray*}
\mbf{f}(\mbf{a})&\subseteq& \mathfrak{m}^{\beta (n)}\\
\mbf{f}(\mbf{a})+H_{\mbf{f}/R}(\mbf{a})&\supseteq&
(t_1,\ldots,t_{d-1}),
\end{eqnarray*}
there is $\mbf{b}=(b_1,\ldots,b_N)\in R^N$ such that $\mbf{f}(\mbf{b})=0$ and
$\mbf{a}\equiv \mbf{b}\text{ (mod }\mathfrak{m}^nR^N)$.
\end{theorem}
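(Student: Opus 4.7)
The plan is to proceed by induction on $d = \dim R$. When $d=1$, the ideal $(t_1,\ldots,t_{d-1})$ is zero and the smoothness hypothesis is automatic, so the theorem reduces to the claim that every $1$-dimensional excellent Henselian local ring containing a field admits a linear Artin function for every system of polynomials. For Henselian discrete valuation rings this is Greenberg's theorem \cite{greenberg}; the general $1$-dimensional case follows by passing to the (finite, by excellence) normalization, applying Greenberg on each Henselian DVR component, and transferring back to $R$ via Artin--Rees together with Popescu's smoothing theorem.

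For the inductive step, set $t:=t_1$. Since $t$ avoids every minimal prime of $R$, the quotient $\bar R := R/(t)$ is an excellent Henselian local ring of dimension $d-1$ containing a field, the images $\bar t_2,\ldots,\bar t_{d-1}$ continue to satisfy the parameter-type condition, and by compatibility of $H_{\mathbf{f}/R}$ with base change the hypothesis descends to $(\bar t_2,\ldots,\bar t_{d-1}) \subseteq \bar{\mathbf{f}}(\bar{\mathbf{a}}) + H_{\bar{\mathbf{f}}/\bar R}(\bar{\mathbf{a}})$. The inductive hypothesis thus provides a linear function $\bar\beta$ working for the system $\bar{\mathbf{f}}$ over $\bar R$. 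I would seek $\beta$ of the form $\alpha n + \gamma$, with $\alpha,\gamma$ to be chosen by combining $\bar\beta$, an Artin--Rees constant controlling $(t)\cap\mathfrak{m}^k$, and the pair $(n_0,r)$ from Elkik's Theorem~\ref{elkik2}. Given $(n,\mathbf{a})$, apply $\bar\beta$ at a target $n'$ linear in $n$ to produce $\bar{\mathbf{b}}\in \bar R^N$ with $\bar{\mathbf{f}}(\bar{\mathbf{b}})=0$ and $\bar{\mathbf{a}}\equiv\bar{\mathbf{b}}\pmod{\bar{\mathfrak{m}}^{n'}}$, and lift the difference into $\mathfrak{m}^{n'}R^N$ to obtain $\mathbf{b}_0\in R^N$ with $\mathbf{a}\equiv \mathbf{b}_0\pmod{\mathfrak{m}^{n'}}$ and $\mathbf{f}(\mathbf{b}_0)\in(t)$.

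By Artin--Rees, $\mathbf{f}(\mathbf{b}_0)\in (t)\cap\mathfrak{m}^{n'}\subseteq t\mathfrak{m}^{n'-c}$, so $\mathbf{f}(\mathbf{b}_0) = t\mathbf{g}$ with $\mathbf{g}\in\mathfrak{m}^{n'-c}R^r$. Since the entries of the Jacobian and the generators of $H_{\mathbf{f}/R}$ depend polynomially on the point, $H_{\mathbf{f}/R}(\mathbf{b}_0)$ agrees with $H_{\mathbf{f}/R}(\mathbf{a})$ modulo $\mathfrak{m}^{n'}$, and combined with the original hypothesis this yields $(t)\subseteq H_{\mathbf{f}/R}(\mathbf{b}_0)+\mathfrak{m}^{n'}$. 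A Hensel-type lifting---either a direct Newton iteration on $\mathbf{b}_0$ exploiting that $\mathbf{f}(\mathbf{b}_0)$ lies in a controlled product of $H_{\mathbf{f}/R}(\mathbf{b}_0)$ and a high power of $\mathfrak{m}$, or an application of Elkik's Theorem~\ref{elkik2} to a suitably modified auxiliary system---should then yield $\mathbf{b}\in R^N$ with $\mathbf{f}(\mathbf{b})=0$ and $\mathbf{a}\equiv\mathbf{b}\pmod{\mathfrak{m}^n}$.

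The main obstacle, in my view, is precisely this final correction step: Elkik's theorem demands the genuine inclusion $H_{\mathbf{f}/R}(\mathbf{b}_0)\supseteq\mathfrak{m}^h$, whereas we only have the weaker $(t)\subseteq H_{\mathbf{f}/R}(\mathbf{b}_0)+\mathfrak{m}^{n'}$, and $(t)$ is not $\mathfrak{m}$-primary. Closing this gap without destroying the linearity of $\beta$ is the technical heart of the argument; it will likely require either a direct Newton iteration that exploits the equicharacteristic hypothesis together with Popescu's smoothing, or a refinement of Elkik's theorem adapted to the setting in which the smooth-locus ideal merely contains a specified, not necessarily $\mathfrak{m}$-primary, ideal. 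Careful bookkeeping between the Artin--Rees constant, the Elkik pair $(n_0,r)$, and the inductive constant $\bar\beta$ is what ultimately pins down the linear coefficients $\alpha$ and $\gamma$.
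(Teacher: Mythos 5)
Your inductive step stalls exactly where you say it does, and the missing ingredient is not a ``refinement of Elkik's theorem'' but a different device that the paper introduces before reducing modulo $t_1$. Two points are essential. First, the paper does not pass modulo $(t_1)$ but modulo $(t_1)^s$ with $s\geq n_0+2$, where $n_0$ comes from Elkik's Lemma~\ref{elkik1} (the $(t)$-adic lemma, applied with $t=t_1$, $h=1$ and the auxiliary ideal $I=\mathfrak{m}^n$), not from Theorem~\ref{elkik2}; reducing modulo the first power of $(t_1)$, as you propose, only gives $\mathbf{f}(\mathbf{b}_0)\in t_1\mathfrak{m}^{n'-c}$, and with $h=1$ Lemma~\ref{elkik1} then yields no usable correction (the exponent of $t_1$ must exceed $n_0$), so the linearity cannot be recovered from that position. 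Second, and more importantly, the certificate $H_{\mathbf{f}/R}\supseteq(t_1)$ at the corrected point is not obtained by continuity of $H_{\mathbf{f}/R}$ along the congruence $\mathbf{b}_0\equiv\mathbf{a}\pmod{\mathfrak{m}^{n'}}$: your inclusion $(t_1)\subseteq H_{\mathbf{f}/R}(\mathbf{b}_0)+\mathfrak{m}^{n'}$ cannot be upgraded to $(t_1)\subseteq H_{\mathbf{f}/R}(\mathbf{b}_0)$ by Nakayama, since $\mathfrak{m}^{n'}$ is not a small multiple of $(t_1)$ and any Artin--Rees constant for $H_{\mathbf{f}/R}(\mathbf{b}_0)+(t_1)$ depends on the varying point $\mathbf{b}_0$, destroying uniformity. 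The paper circumvents this by appending the auxiliary equation $G=Z_1f_1+\cdots+Z_rf_r+Y_1g_1+\cdots+Y_Mg_M-t_1$ (where $g_1,\ldots,g_M$ generate $H_{\mathbf{f}/R}$) to the system before applying the inductive hypothesis over $R/(t_1)^s$; Lemma~\ref{enlarge} guarantees $H_{\mathbf{f},G/R}\supseteq(H_{\mathbf{f}/R})^2$, so the smoothness hypothesis survives (with $(t_1,\ldots,t_{d-1})^2$), and an exact solution of $\bar{\mathbf{f}}=\bar G=0$ modulo $(t_1)^s$ witnesses $t_1\in H_{\mathbf{f}/R}(\mathbf{b}')+\mathbf{f}(\mathbf{b}')+(t_1)^s\subseteq H_{\mathbf{f}/R}(\mathbf{b}')+(t_1)^s$, whence $H_{\mathbf{f}/R}(\mathbf{b}')\supseteq(t_1)$ does follow from Nakayama because now the error lies in $(t_1)^s$ with $s\geq 2$. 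Combined with $\mathbf{f}(\mathbf{b}')\subseteq(t_1)^s\cap\mathfrak{m}^{n+k}\subseteq(t_1)^s\mathfrak{m}^{n}$ (Artin--Rees for the single, fixed element $t_1$), Lemma~\ref{elkik1} then corrects $\mathbf{b}'$ within $t_1^{s-1}\mathfrak{m}^{n}R^N\subseteq\mathfrak{m}^nR^N$. This $G$-equation mechanism is the idea your sketch lacks.

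A secondary remark on your base case: transferring LAP \emph{down} from the normalization to $R$ is not covered by any statement you cite and is not routine; the paper instead reduces to the complete case (Lemma~\ref{complete}), invokes Cohen's structure theorem to exhibit $R$ as a finite module over a complete DVR $(V,p)$, applies Greenberg to $V$, and ascends LAP along the module-finite extension via Lemma~\ref{ext}, concluding with the observation that $pR$ is $\mathfrak{m}$-primary. If you want to keep your normalization route you would need to prove a descent statement for linear Artin bounds along finite birational maps, which is an additional (and nontrivial) burden.
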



For the proof of this theorem we need some auxiliary lemmas. The first
lemma is concerned with the behavior of the Artin functions under
module-finite extensions. It is well-known that the approximation
property is preserved under module-finite extensions. The proof for
this fact can be easily adjusted to show that the same conclusion
holds for the strong approximation property. In what follows the Linear Approximation
property is abbreviated as LAP.


\begin{lemma}\label{ext} Let $S$ be a module-finite $R$-algebra, $I\subset R$ an ideal. Suppose $(R,I)$ has SAP.
Then $(S,IS)$ also has SAP. Furthermore, if $(R,I)$ has LAP then so does $(S,IS)$.
\end{lemma}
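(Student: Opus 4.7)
The plan is to reduce any system of polynomial equations over $S$ to a related system over $R$ and then invoke SAP (or LAP) of $(R,I)$ directly. First I would fix $R$-module generators $w_1,\dots,w_k$ of $S$, record the multiplication table $w_iw_j=\sum_\ell c_{ij}^\ell w_\ell$, and choose $R$-module generators $\mathbf{u}^{(1)},\dots,\mathbf{u}^{(p)}\in R^k$ of the kernel $K$ of the surjection $\phi\colon R^k\twoheadrightarrow S$ sending $e_\ell\mapsto w_\ell$.

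Given an arbitrary system $\mathbf{g}=(g_1,\dots,g_s)\subset S[Y_1,\dots,Y_M]$, I would introduce fresh indeterminates $X_{j\ell}$ and $Z_{iq}$, substitute $Y_j=\sum_\ell X_{j\ell}w_\ell$, and repeatedly apply the multiplication table to rewrite each $g_i$ in the form $\sum_\ell F_{i\ell}(X)w_\ell$ with $F_{i\ell}\in R[X]$. The associated system $\mathbf{F}$ over $R$ then consists of the polynomials $F_{i\ell}(X)-\sum_q Z_{iq}\,u^{(q)}_\ell$ for $1\le i\le s$, $1\le\ell\le k$. In one direction, any $R$-solution $(a,\mu)$ of $\mathbf{F}$ yields the $S$-solution $\mathbf{b}:=\bigl(\sum_\ell a_{j\ell}w_\ell\bigr)_j$ of $\mathbf{g}$. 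Conversely --- and this is the key calculation --- if $\mathbf{b}\in S^M$ satisfies $\mathbf{g}(\mathbf{b})\subseteq(IS)^n=I^nS$, choose any $a_{j\ell}\in R$ with $b_j=\sum_\ell a_{j\ell}w_\ell$; then $\sum_\ell F_{i\ell}(a)w_\ell\in I^nS=\sum_\ell I^n w_\ell$ produces $r_{i\ell}\in I^n$ with $(F_{i\ell}(a)-r_{i\ell})_\ell\in K$, and expressing this tuple as an $R$-combination of the $\mathbf{u}^{(q)}$ produces $\mu_{iq}\in R$ with $\mathbf{F}(a,\mu)\subseteq I^n$.

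With this dictionary in hand, I would apply the Artin function $\beta_{\mathbf{F}}$ of $\mathbf{F}$ over $(R,I)$: from $\mathbf{g}(\mathbf{b})\subseteq(IS)^{\beta_{\mathbf{F}}(n)}$ one obtains an approximate $\mathbf{F}$-solution modulo $I^{\beta_{\mathbf{F}}(n)}$, hence an exact $\mathbf{F}$-solution $(a',\mu')$ congruent to $(a,\mu)$ modulo $I^n$; setting $b'_j:=\sum_\ell a'_{j\ell}w_\ell$ then gives $\mathbf{g}(\mathbf{b}')=0$ and $\mathbf{b}'\equiv\mathbf{b}\pmod{I^nS^M}$. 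This shows $\beta_{\mathbf{g}}\le\beta_{\mathbf{F}}$, so a linear bound for $\mathbf{F}$ transfers to a linear bound for $\mathbf{g}$. The main obstacle is precisely the approximate half of the dictionary: one must verify that neither the choice of lifts $a_{j\ell}$ nor the decomposition through $K$ costs any $I$-adic precision, so that the Artin function for $(S,IS)$ is controlled by exactly the same (linear) bound as the one furnished by $(R,I)$.
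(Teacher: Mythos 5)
Your proposal is correct and follows essentially the same route as the paper: write $S$ as a quotient of a free $R$-module, encode each $S$-polynomial as a tuple of $R$-polynomials in the coordinates, add auxiliary variables for kernel coefficients, and observe that $\phi^{-1}(I^nS)=I^nR^k+K$ so that approximate $S$-solutions lift to approximate $R$-solutions of the auxiliary system without loss of $I$-adic order, whence the same linear bound transfers. The ``obstacle'' you flag at the end is exactly the computation you already carried out, so nothing is missing.
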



\begin{proof} We can write $S=Rs_1+\cdots + Rs_p$. Let $\phi$ be the $R$-linear
map $R^p\longrightarrow S$ defined by $\phi (e_i) = s_i$ for all
$i$, where the $e_i$ form the standard basis for $R^p$. Suppose
$\ker \phi\subseteq R^p$
is generated by $z_1=\begin{pmatrix}z_{11}\\ \vdots\\z_{1p}\end{pmatrix},\ldots, z_l=\begin{pmatrix}z_{l1}\\ \vdots\\
z_{lp}\end{pmatrix}, z_{ij}\in R$.

Let $F_1, \ldots, F_r$ be a system of polynomials in $S[X_1, \ldots, X_N]$. For each $i=1,\ldots,N$ we introduce new
variables $X_{i1}, \ldots, X_{ip}$ and write
$$
F_i(\sum_{j=1}^{p}X_{1j}s_j, \ldots, \sum_{j=1}^pX_{Nj}s_j)=\sum_{j=1}^pF_{ij}(X_{11}, \ldots, X_{Np})s_j,
$$
where the $F_{ij}$ are polynomials with coefficients in $R$. Define
new variables $T_{ij}, i=1,\ldots, r, j=1,\ldots, l$. Let $\beta $
be a linear bound for the Artin function of the system
$\{G_{ik}=F_{ik}(X_{11}, \ldots, X_{Np})-\sum_{j=1}^pT_{ij}z_{jk}|\quad
i=1,\ldots, r; k=1,\ldots, N\}$ of polynomials in $R[\{X_{ij}\},
\{T_{ij}\}]$. We claim that this function is also a bound for the
Artin function of the system $\{F_i\}$ in $S[X_1, \ldots, X_N]$.

Suppose for some $\{a_i\}\subset S $ we have $F_i(a_1, \ldots, a_N)\in I^{\beta (n)}S$ for all $i$. We can write
$a_i=\sum_{j=1}^pa_{ij}s_j$ with $a_{ij}\in R$. Then
\begin{eqnarray*}
\sum_{j=1}^pF_{ij}(a_{11}, \ldots, a_{Np})s_j&\in& I^{\beta (n)}S\\
\Longrightarrow\begin{pmatrix} F_{i1}(a_{11},\ldots,a_{Np})\\ \vdots \\
F_{ip}(a_{11},\ldots,a_{Np})\end{pmatrix} &\in&I^{\beta (n)}R^p+\ker\phi\\\Longrightarrow\begin{pmatrix}
F_{i1}(a_{11},\ldots,a_{Np})\\ \vdots \\ F_{ip}(a_{11},
\ldots,a_{Np})\end{pmatrix}-\sum_{j=1}^lu_{ij}\begin{pmatrix}z_{j1}\\ \vdots\\
z_{jp}\end{pmatrix}&\in& I^{\beta (n)}R^p
\end{eqnarray*}
for some $u_{ij}\in R$.

This implies that $\{a_{ij}\}, \{u_{ij}\}$ is an approximate solution of order $\beta (n)$ of the system
$\{G_{ik}\}$. Then there are$\{b_{ij}\}, \{v_{ij}\}$ consisting of
 elements of $R$ such that $a_{ij}\equiv b_{ij}, u_{ij}\equiv v_{ij}$ mod $I^n$ and $F_{ik}(\{b_{ij}\})=\sum_{j=1}^lv_{ij}z_{jk}$
 for all $i=1,\ldots, r; k=1,\ldots, p$.
 Define $b_i=\sum_{j=1}^pb_{ij}s_j$. Clearly we have $a_i\equiv b_i$ mod $I^nS$ and

\begin{eqnarray*}
\begin{pmatrix} F_{i1}(b_{11},\ldots,b_{Np})\\ \vdots \\ F_{ip}(b_{11},\ldots,b_{Np})\end{pmatrix}&=&\sum_{j=1}^lv_{ij}\begin{pmatrix}z_{j1}\\ \vdots\\
z_{jp}\end{pmatrix}\in\ker \phi\\
\Longrightarrow
F_i(b_1,\ldots,b_N)&=&\sum_{j=1}^pF_{ij}(b_{11},\ldots,b_{Np})s_j=0.
\end{eqnarray*}
\end{proof}


The next easy lemma often helps us to reduce the problems to the complete case.


\begin{lemma}\label{complete}
Let $(R,\mathfrak{m})$ be a local ring, $\mbf{f}=(f_1,\ldots,f_r)$ a
system of polynomials in $R[X_1,\ldots, X_N]$. Suppose this system
has AP. If this system, viewed as polynomials over $\widehat{R}$,
has SAP then it has SAP as polynomials over R. The conclusion also
holds if we replace SAP by LAP.
\end{lemma}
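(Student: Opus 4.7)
The plan is to use the completion $\widehat{R}$ as a bridge: an approximate solution in $R$ is automatically an approximate solution in $\widehat{R}$, so SAP over $\widehat{R}$ produces an exact solution in $\widehat{R}$ close to it, and AP over $R$ then descends that exact solution to $R$ without losing closeness. The key arithmetic fact we will rely on throughout is that for a Noetherian local ring one has $\mathfrak{m}^c\widehat{R}\cap R=\mathfrak{m}^c$ for every $c\geq 0$, so congruences modulo $\mathfrak{m}^c\widehat{R}^N$ between elements of $R^N$ are the same as congruences modulo $\mathfrak{m}^cR^N$.

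Concretely, let $\widehat{\beta}$ be an Artin function (respectively a linear bound) for $\mbf{f}$ viewed over $\widehat{R}$. I would claim that $\widehat{\beta}$ also serves as an Artin function (respectively a linear bound) for $\mbf{f}$ over $R$. Fix $n\in\mathbb{N}$ and $\mbf{a}\in R^N$ with $\mbf{f}(\mbf{a})\subseteq\mathfrak{m}^{\widehat{\beta}(n)}$. Regarded inside $\widehat{R}^N$, the element $\mbf{a}$ still satisfies $\mbf{f}(\mbf{a})\subseteq\mathfrak{m}^{\widehat{\beta}(n)}\widehat{R}$, so the SAP assumption over $\widehat{R}$ yields $\widehat{\mbf{b}}\in\widehat{R}^N$ with $\mbf{f}(\widehat{\mbf{b}})=0$ and $\widehat{\mbf{b}}\equiv\mbf{a}\pmod{\mathfrak{m}^n\widehat{R}^N}$.

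Now I apply the AP hypothesis for $\mbf{f}$ over $R$ with precision $c=n$ to the exact solution $\widehat{\mbf{b}}\in\widehat{R}^N$: this produces $\mbf{b}\in R^N$ with $\mbf{f}(\mbf{b})=0$ and $\mbf{b}\equiv\widehat{\mbf{b}}\pmod{\mathfrak{m}^n\widehat{R}^N}$. Combining the two congruences gives $\mbf{a}\equiv\mbf{b}\pmod{\mathfrak{m}^n\widehat{R}^N}$, and since both $\mbf{a}$ and $\mbf{b}$ lie in $R^N$, the identity $\mathfrak{m}^n\widehat{R}^N\cap R^N=\mathfrak{m}^nR^N$ upgrades this to $\mbf{a}\equiv\mbf{b}\pmod{\mathfrak{m}^nR^N}$, which is exactly what SAP over $R$ requires. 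Linearity is preserved because we copied $\widehat{\beta}$ unchanged.

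There is no real obstacle here; the argument is essentially formal once one notices that the two ingredients (SAP in $\widehat{R}$ and AP from $\widehat{R}$ down to $R$) compose cleanly thanks to the faithful flatness relation $\mathfrak{m}^c\widehat{R}\cap R=\mathfrak{m}^c$. The one point I would double-check is that AP over $R$ is allowed to start from \emph{any} $\widehat{R}$-solution (not only solutions arising from $\widehat{R}$-tuples built in some specific way), but this is exactly the content of the definition of AP stated earlier in the paper, applied to $\widehat{\mbf{a}}=\widehat{\mbf{b}}$ and $c=n$.
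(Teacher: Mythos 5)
Your argument is correct and follows essentially the same route as the paper's own proof: transfer the approximate solution to $\widehat{R}$, use SAP there to get an exact solution, then use AP over $R$ to descend it, keeping the same bound $\widehat{\beta}$. The only difference is that you make explicit the step $\mathfrak{m}^n\widehat{R}^N\cap R^N=\mathfrak{m}^nR^N$, which the paper leaves implicit.
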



\begin{proof} View $(f_1,\ldots,f_r)$ as polynomials in
$\widehat{R}[X_1,\ldots,X_N]$. Let $\beta$ be a bound for the
Artin function of this system with respect to
$(\widehat{R},\widehat{\mathfrak{m}})$. We claim that this function
also works for the system viewed as polynomials in $R[X_1,\ldots,X_N]$.
Indeed, suppose there is $\mbf{a}=(a_1,\ldots,a_N)\in R^N$ such that
$\mbf{f}(\mbf{a})\subseteq \mathfrak{m}^{\beta (n)}\subseteq
\widehat{\mathfrak{m}}^{\beta (n)}.$ Then there is
$\widehat{\mbf{a}}=(\widehat{a}_1,\ldots,\widehat{a}_N)\in
\widehat{R}^N$ such that $\mbf{a}\equiv \widehat{\mbf{a}}$ (mod
$\widehat{\mathfrak{m}}^n$) and $\mbf{f}(\widehat{\mbf{a}})=0.$
Since $\mbf{f}$ has AP over $R$ by the hypothesis, we can find
$\mbf{b}=(b_1,\ldots,b_N)\in R^N$ such that
$\mbf{b}\equiv\widehat{\mbf{a}}$ (mod $\widehat{\mathfrak{m}}^n$)
and $\mbf{f}(\mbf{b})=0$.
\end{proof}


The proof of the main theorem uses induction on the dimension of the ring, for which
we will need the following result on the rings of dimension one.


\begin{theorem}\label{dim1}
Let $(R,\mathfrak{m})$ be an excellent Henselian local ring of
dimension 1 which is assumed to be an integral domain in the
mixed characteristic case. Then $R$ has LAP.
\end{theorem}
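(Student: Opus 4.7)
The plan is to reduce to Greenberg's classical Linear Artin Approximation theorem for complete discrete valuation rings, using the two reduction tools already at our disposal: Lemma \ref{complete} (which passes LAP from $\widehat{R}$ down to $R$, since an excellent Henselian local ring has AP) and Lemma \ref{ext} (which propagates LAP along module-finite extensions). Depending on the residue characteristic, these are combined with either Cohen's structure theorem or with the normalization.

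In the equal characteristic case I would first invoke Lemma \ref{complete} to reduce to proving LAP for $\widehat{R}$. Cohen's structure theorem then supplies a coefficient field $k\subseteq\widehat{R}$ and an element $t\in\widehat{\mathfrak{m}}$ such that $\widehat{R}$ is a module-finite extension of the complete discrete valuation ring $k[[t]]$. Greenberg's theorem yields LAP for $k[[t]]$, and Lemma \ref{ext} transfers it to $\widehat{R}$ with respect to $t\widehat{R}$; since $\widehat{\mathfrak{m}}^N\subseteq t\widehat{R}$ for some $N$, this is equivalent to LAP with respect to $\widehat{\mathfrak{m}}$ after a linear rescaling of the bound.

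The mixed characteristic case exploits the domain hypothesis via normalization. Let $R'$ be the integral closure of $R$ in its fraction field. Excellence of $R$ forces $R'$ to be module-finite over $R$, and since $R$ is a Henselian local domain of dimension one, $R'$ is an excellent Henselian local normal domain of dimension one, hence a Henselian DVR. Applying Greenberg to the complete DVR $\widehat{R'}$ and then Lemma \ref{complete} gives LAP for $R'$ with some linear bound $\beta'$ with respect to the maximal ideal $\mathfrak{m}'$ of $R'$. To transfer LAP back to $R$ I would use the conductor trick: because $R$ and $R'$ share the same fraction field, $R'/R$ is supported only at $\mathfrak{m}$ and therefore has finite length as an $R$-module, so there exist integers $c,e$ with $\mathfrak{m}^cR'\subseteq R$ and $(\mathfrak{m}')^e\subseteq \mathfrak{m}R'$. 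Given $\mathbf{a}\in R^N$ with $\mathbf{f}(\mathbf{a})\in\mathfrak{m}^{\beta'(e(n+c))}$, LAP of $R'$ furnishes $\mathbf{b}\in(R')^N$ with $\mathbf{f}(\mathbf{b})=0$ and $\mathbf{a}-\mathbf{b}\in(\mathfrak{m}')^{e(n+c)}\subseteq\mathfrak{m}^{n+c}R'\subseteq\mathfrak{m}^nR^N$; in particular $\mathbf{b}\in R^N$ and $\mathbf{a}\equiv\mathbf{b}\pmod{\mathfrak{m}^nR^N}$, giving a linear bound for $R$.

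The main obstacle is making sure Greenberg's theorem applies in the generality needed here, since, as the introduction of the paper warns, Greenberg's original result requires a separability condition on the residue field. For $k[[t]]$ in equal characteristic this is unproblematic; in mixed characteristic one must check that excellence of $R$ propagates to $\widehat{R'}$ in a way that brings it within the scope of Greenberg's theorem or of an excellent-DVR extension thereof. A secondary bookkeeping point is the linear comparison between the topologies defined by $\mathfrak{m}R'$ and $\mathfrak{m}'$, which is precisely what forces the factors $c$ and $e$ in the final bound but costs only a linear rescaling.
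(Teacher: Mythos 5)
Your proof is correct, and in the equal characteristic case it coincides with the paper's argument: reduce to $\widehat{R}$ by Lemma \ref{complete}, use Cohen's structure theorem to exhibit $\widehat{R}$ as a module-finite extension of the complete DVR $k[[t]]$, quote Greenberg \cite{greenberg} there, ascend by Lemma \ref{ext}, and convert the $t$-adic bound into an $\mathfrak{m}$-adic one because $t\widehat{R}$ is $\widehat{\mathfrak{m}}$-primary. Where you genuinely diverge is mixed characteristic. The paper handles it by the same mechanism as equal characteristic: once $R$ is complete and a domain, $p\neq 0$ lies in $\mathfrak{m}$ and is a parameter, so Cohen's theorem makes $R$ module-finite over its coefficient ring $(V,p)$, a complete DVR, and Greenberg plus Lemma \ref{ext} (with $pR$ being $\mathfrak{m}$-primary) finish the proof; the domain hypothesis enters only to guarantee that $p$ is a parameter. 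You instead pass to the normalization $R'$, which by excellence is module-finite, hence a Henselian excellent DVR, obtain LAP for $R'$ from Greenberg applied to $\widehat{R'}$ together with Lemma \ref{complete}, and then descend to $R$ via the conductor: $R'/R$ has finite length, so $\mathfrak{m}^cR'\subseteq R$ and $(\mathfrak{m}')^e\subseteq\mathfrak{m}R'$, and your bound $n\mapsto\beta'(e(n+c))$ works because the exact solution produced in $R'$ is congruent to $\mathbf{a}$ modulo $\mathfrak{m}^{n+c}R'\subseteq\mathfrak{m}^nR$ and therefore already lies in $R^N$. This downward transfer along a finite birational extension is not supplied by the paper's lemmas (Lemma \ref{ext} only ascends), so the conductor step is the new ingredient of your route; it trades the module-finiteness clause of the mixed-characteristic Cohen theorem for the finiteness of normalization plus some extra bookkeeping, while the paper's route is shorter. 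Finally, the obstacle you flag about Greenberg's separability hypothesis is vacuous here: you only apply Greenberg to complete DVRs ($k[[t]]$ and $\widehat{R'}$), for which the hypothesis holds automatically --- the same reason the paper may quote Greenberg for its coefficient ring $V$.
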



\begin{proof} We know that with the given hypothesis $R$ has AP. By
Lemma~\ref{complete} it suffices to assume that $R$ is complete.
Using the Cohen structure theorem there is a coefficient ring of $R$
over which $R$ is a finite module. Since $R$ has dimension 1, this
coefficient ring must be a complete DVR, denoted by $(V,p)$. By a theorem of
Greenberg (\cite{greenberg}), $(V,p)$ has LAP. Thus $(R,pR)$
has LAP by Lemma ~\ref{ext}. But the ideal $pR$ is $\mathfrak{m}$-primary, so $(R,\mathfrak{m})$
also has LAP.
\end{proof}


The following lemma tells us the behavior of the smooth locus after adjoining a certain equation to the system.


\begin{lemma}\label{enlarge}
Let $\mbf{X}=(X_1,\ldots,X_N)$ and $\mbf{f}=(f_1,\ldots,f_r)\subseteq
R[\mbf{X}]$. Suppose $H_{\mbf{f}/R}$ is generated by
$g_1,\ldots,g_M$ in $R[\mbf{X}]$. Let $\mbf{Y}=(Y_1,\ldots,Y_M),
\mbf{Z}=(Z_1, \ldots, Z_T)$ be new variables and $h\in
R[\mbf{X},\mbf{Z}]$. Define $G=h+Y_1g_1+\cdots+Y_Mg_M\in
R[\mbf{X},\mbf{Y},\mbf{Z}]$. Then
$$
H_{\mbf{f}, G/R}\supseteq \big(H_{\mbf{f}/R}\big)^2
R[\mbf{X},\mbf{Y},\mbf{Z}].
$$
\end{lemma}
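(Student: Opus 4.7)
The plan is to pick apart the definition of $H_{\mathbf{f},G/R}$ summand by summand and exhibit explicit $(p+1)$-minors of the enlarged Jacobian that recover the desired generators. Write $H_{\mathbf{f}/R}=\sum_{(i_1,\ldots,i_p)}\Delta(i_1,\ldots,i_p)\cdot C(i_1,\ldots,i_p)$, where $C(i_1,\ldots,i_p)=(f_{i_1},\ldots,f_{i_p}):(f_1,\ldots,f_r)$. Since $(H_{\mathbf{f}/R})^2$ is generated by products of the form $\delta\cdot c\cdot g_j$ with $\delta$ a $p$-minor of the Jacobian of $(f_{i_1},\ldots,f_{i_p})$ in some $p$ of the $X$-variables, $c\in C(i_1,\ldots,i_p)$, and $j\in\{1,\ldots,M\}$, it suffices to show each such product lies in $H_{\mathbf{f},G/R}$.

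First I would compute the relevant $(p+1)$-minor. Fix $(i_1,\ldots,i_p)$ and columns $X_{k_1},\ldots,X_{k_p}$ so that $\delta=\det\!\bigl(\partial f_{i_j}/\partial X_{k_l}\bigr)$, and fix $j$. Consider the submatrix of the Jacobian of $(f_{i_1},\ldots,f_{i_p},G)$ with respect to the variables $X_{k_1},\ldots,X_{k_p},Y_j$. Because each $f_{i_s}$ is independent of the $Y$'s, this submatrix has block form
\[
\begin{pmatrix} \bigl(\partial f_{i_s}/\partial X_{k_l}\bigr) & 0 \\ \bigl(\partial G/\partial X_{k_l}\bigr) & \partial G/\partial Y_j \end{pmatrix},
\]
whose determinant is $\delta\cdot(\partial G/\partial Y_j)=\delta\cdot g_j$ by cofactor expansion along the last column. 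Thus $\delta\cdot g_j$ belongs to the ideal of $(p+1)$-minors of the Jacobian of $(f_{i_1},\ldots,f_{i_p},G)$; that is, to $\Delta'(i_1,\ldots,i_p,r+1)$ in the notation adapted to the enlarged system.

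Next I would verify the colon-ideal step. Any $c\in C(i_1,\ldots,i_p)\subseteq R[\mathbf{X}]$, viewed in $R[\mathbf{X},\mathbf{Y},\mathbf{Z}]$, satisfies $c\cdot f_s\in(f_{i_1},\ldots,f_{i_p})\subseteq(f_{i_1},\ldots,f_{i_p},G)$ for every $s$, and trivially $c\cdot G\in(G)$. Hence $c\in(f_{i_1},\ldots,f_{i_p},G):(f_1,\ldots,f_r,G)$ in $R[\mathbf{X},\mathbf{Y},\mathbf{Z}]$. Putting these together,
\[
\delta\cdot g_j\cdot c\;\in\;\Delta'(i_1,\ldots,i_p,r+1)\cdot\bigl((f_{i_1},\ldots,f_{i_p},G):(f_1,\ldots,f_r,G)\bigr)\;\subseteq\;H_{\mathbf{f},G/R}.
\]
Summing over all $(i_1,\ldots,i_p)$ and all $j$, and using that $(g_1,\ldots,g_M)=H_{\mathbf{f}/R}$, we conclude $(H_{\mathbf{f}/R})^2\subseteq H_{\mathbf{f},G/R}$.

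I do not expect a serious obstacle here: the argument is essentially a bookkeeping computation with the Elkik ideal. The only mild subtlety is keeping straight that the colon ideal is taken inside the polynomial ring rather than inside $R$, so that the inclusion $C(i_1,\ldots,i_p)\subseteq C(i_1,\ldots,i_p,r+1)$ after extension of scalars to $R[\mathbf{X},\mathbf{Y},\mathbf{Z}]$ is immediate from $G$ being one of the generators of the larger ideal.
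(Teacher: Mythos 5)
Your proof is correct and follows essentially the same route as the paper: you extract the same $(p+1)$-minor in the columns $X_{k_1},\ldots,X_{k_p},Y_j$, whose value $\delta\cdot g_j$ uses $\partial G/\partial Y_j=g_j$, and the same colon-ideal inclusion $(f_{i_1},\ldots,f_{i_p}):(f_1,\ldots,f_r)\subseteq (f_{i_1},\ldots,f_{i_p},G):(f_1,\ldots,f_r,G)$. The only difference is that you argue generator-by-generator on $(H_{\mbf{f}/R})^2=H_{\mbf{f}/R}\cdot(g_1,\ldots,g_M)$ while the paper phrases the same computation ideal-theoretically.
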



\begin{proof}
Whenever $1\leq i_1<\cdots<i_p\leq r$ denote by
$\Delta_{i_1,\ldots,i_p}^G$ the ideal generated by the $(p+1)$-minors of
the Jacobian matrix of the system $f_{i_1},\ldots,f_{i_p},G$. This
matrix can be explicitly calculated as
$$
\begin{pmatrix}
\frac{\partial f_{i_1}}{\partial X_1} & \ldots & \frac{\partial
f_{i_p}}{\partial X_1}& \frac{\partial
G}{\partial X_1}\\
. & \ldots &. & \ldots\\
\frac{\partial f_{i_1}}{\partial X_N} & \ldots & \frac{\partial
f_{i_p}}{\partial X_N}& \frac{\partial G}{\partial X_N}\\
0 & \ldots & 0 & g_1\\
. & \ldots & . & .\\
0 & \ldots & 0 & g_M\\
0 & \ldots & 0 & \frac{\partial G}{\partial Z_1}\\
. & \ldots & . & .\\
0 & \ldots & 0 & \frac{\partial G}{\partial Z_r}
\end{pmatrix}
$$

Consider the $(p+1)$-minor of this matrix formed by the rows
corresponding to $X_{j_1},\ldots, X_{j_p}$ and $g_k$ for some $1\leq
j_1 <\cdots<j_p\leq N, 1\leq k\leq M$. This minor is clearly equal
to the product of $g_k$ and the $p$-minor of the Jacobian matrix (in
$R[\mbf{X}]$) of $f_{i_1},\ldots,f_{i_p}$ formed by the rows
corresponding to $X_{j_1},\ldots,X_{j_p}$. We can deduce from this
fact that
\begin{eqnarray*}
\Delta_{i_1,\ldots,i_p}^G&\supseteq& \Delta_{i_1,\ldots,i_p}(g_1,\ldots,g_M)R[\mbf{X},\mbf{Y},\mbf{Z}]\\
&=& \Delta_{i_1,\ldots,i_p}H_{\mbf{f}/R}R[\mbf{X},\mbf{Y},\mbf{Z}].
\end{eqnarray*}
Hence
\begin{eqnarray*}
H_{\mbf{f},G/R}&\supseteq &\sum_{1\leq i_1<\cdots<i_p\leq r}\Delta_{i_1,\ldots,i_p}^G\big((f_{i_1},\ldots,f_{i_p},G):_R(f_1,\ldots,f_r,G)\big)\\
&\supseteq& \sum_{1\leq i_1<\cdots<i_p\leq r} \Delta_{i_1,\ldots,i_p}^G \big((f_{i_1},\ldots,f_{i_p}):_R(f_1,\ldots,f_r)\big)\\
&\supseteq & \sum_{1\leq i_1<\cdots<i_p\leq
r}\Delta_{i_1,\ldots,i_p}\big((f_{i_1},\ldots,f_{i_p}):_R(f_1,\ldots,f_r)\big)
H_{\mbf{f}/R}R[\mbf{X},\mbf{Y},\mbf{Z}]\\
&=& \big(H_{\mbf{f}/R}\big)^2R[\mbf{X},\mbf{Y},\mbf{Z}].
\end{eqnarray*}
\end{proof}


We will also need the following lemma of Elkik.


\begin{lemma}\label{elkik1}\cite[Lemma 1, Section I]{elkik} Let $R$ be a complete ring
with respect to the $(t)$-adic topology for some $t\in R$.
Then for each integer $h$ there is an integer $n_0$ with the
following property. For any ideal $I$ of $R$,
$\mbf{f}=(f_1,\ldots,f_r)\subset R[X_1,\ldots,X_N]$, for all $n>n_0$
and for all $\mbf{a}=(a_1,\ldots,a_N)\in R^N$ with
\begin{eqnarray*}
\mbf{f}(\mbf{a})&\subseteq& t^nI\\
H_{\mbf{f}/R}(\mbf{a})&\supseteq& (t^h),
\end{eqnarray*}
there is $\mbf{b}=(b_1,\ldots,b_N)\in R^N$ satisfying $\mbf{f}(\mbf{b})=0$
 and $\mbf{a}\equiv\mbf{b}$ mod $t^{n-h}IR^N$.
\end{lemma}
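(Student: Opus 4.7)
The plan is to construct $\mbf{b}$ via a Newton-type iteration in the $(t)$-adic topology, with $(t)$-adic completeness of $R$ providing convergence. The smoothness hypothesis $H_{\mbf{f}/R}(\mbf{a})\supseteq(t^h)$ supplies ``invertibility of the Jacobian up to a factor of $t^h$'', and the colon ideal built into the very definition of $H_{\mbf{f}/R}$ is what allows one to control the full system $\mbf{f}$ from a chosen subsystem of size equal to the corresponding minor.

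First I would unwind the hypothesis. The containment $H_{\mbf{f}/R}(\mbf{a})\supseteq(t^h)$ yields a presentation
\[
t^h \;=\; \sum_\alpha \delta_\alpha(\mbf{a})\, m_\alpha(\mbf{a}),
\]
where each $\delta_\alpha\in R[\mbf{X}]$ is a $p_\alpha$-minor of the Jacobian of some subsystem $f_{i_1^\alpha},\ldots,f_{i_{p_\alpha}^\alpha}$, and each $m_\alpha\in R[\mbf{X}]$ satisfies $m_\alpha f_j \in (f_{i_1^\alpha},\ldots,f_{i_{p_\alpha}^\alpha})R[\mbf{X}]$ for every $j=1,\ldots,r$. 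I would then build a sequence $\mbf{a}^{(0)}=\mbf{a},\mbf{a}^{(1)},\mbf{a}^{(2)},\ldots$ in $R^N$ such that $\mbf{f}(\mbf{a}^{(k)})$ lies in ever-higher powers of $(t)$.

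The iterative step would set $\mbf{a}^{(k+1)}=\mbf{a}^{(k)}+\epsilon^{(k)}$ with $\epsilon^{(k)}=\sum_\alpha \epsilon^{(k)}_\alpha$, where $\epsilon^{(k)}_\alpha$ is supported on the $p_\alpha$ column indices defining $\delta_\alpha$ and is chosen by the adjugate formula
\[
t^h \cdot (\epsilon^{(k)}_\alpha)_{j_l} \;=\; -\sum_{k'=1}^{p_\alpha} \operatorname{adj}(M_\alpha)_{lk'}\, m_\alpha(\mbf{a}^{(k)})\, f_{i_{k'}^\alpha}(\mbf{a}^{(k)}),
\]
$M_\alpha$ being the corresponding $p_\alpha\times p_\alpha$ Jacobian submatrix at $\mbf{a}^{(k)}$. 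If $\mbf{f}(\mbf{a}^{(k)})\subseteq t^{n_k}IR$ with $n_k\geq h$, the right-hand side lies in $t^{n_k}IR = t^h\cdot t^{n_k-h}IR$, so a valid (non-canonical) $\epsilon^{(k)}_\alpha\in t^{n_k-h}IR^N$ exists. The Cramer identity $M_\alpha\operatorname{adj}(M_\alpha)=\delta_\alpha(\mbf{a}^{(k)})I_{p_\alpha}$, together with $\delta_\alpha m_\alpha \equiv t^h$ and the colon relations, forces the Newton correction to cancel $\mbf{f}(\mbf{a}^{(k)})$ up to terms killed by $t^h$; Taylor's formula to second order then gives the quadratic improvement $\mbf{f}(\mbf{a}^{(k+1)})\subseteq t^{2(n_k-h)}I^2R$. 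Taking $n_0=2h$ ensures $n_{k+1}=2(n_k-h)$ is strictly increasing, so the sequence of orders tends to infinity.

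The main obstacle is the non-canonical division by $t^h$: if $t$ is not a non-zero-divisor, the choice of $\epsilon^{(k)}_\alpha$ is ambiguous modulo $\operatorname{Ann}(t^h)$, and one must reason ideal-theoretically, verifying that the required conclusion $\mbf{f}(\mbf{a}^{(k+1)})\in t^{2(n_k-h)}I^2R$ holds as a containment for any valid choice of $\epsilon^{(k)}_\alpha$, using the colon relations $m_\alpha f_j \in (f_{i_1^\alpha},\ldots,f_{i_{p_\alpha}^\alpha})$ at the level of polynomials to absorb the annihilator terms. A secondary check is that $H_{\mbf{f}/R}(\mbf{a}^{(k)})$ continues to contain $t^h$: since $\mbf{a}^{(k)}-\mbf{a}\in t^{n-h}IR^N$, the quantities $\delta_\alpha(\mbf{a}^{(k)})m_\alpha(\mbf{a}^{(k)})$ agree with $t^h$ modulo arbitrarily high powers of $t$, which is sufficient to keep the iteration going. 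Finally, by $(t)$-adic completeness the series $\mbf{b}:=\mbf{a}+\sum_k\epsilon^{(k)}$ converges in $R^N$, lies in $\mbf{a}+t^{n-h}IR^N$ (the $k=0$ term dominating), and satisfies $\mbf{f}(\mbf{b})=0$.
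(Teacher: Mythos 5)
You should first note that the paper does not prove this lemma at all---it is quoted verbatim from Elkik \cite[Lemme 1, Section I]{elkik}---so the only question is whether your reconstruction stands on its own. Your overall strategy (Newton iteration whose correction is built from adjugates of $p_\alpha\times p_\alpha$ Jacobian submatrices, weighted by the colon-ideal elements $m_\alpha$, starting from a presentation $t^h=\sum_\alpha\delta_\alpha(\mbf{a})m_\alpha(\mbf{a})$) is the right one, and the first-order cancellation you invoke is genuinely available: for $f_j$ in the subsystem $\alpha$, Cramer gives $\nabla f_j(\mbf{a})\cdot\bigl(t^h\epsilon_\alpha\bigr)=-\delta_\alpha(\mbf{a})m_\alpha(\mbf{a})f_j(\mbf{a})$ exactly, and for $f_j$ outside it, differentiating the polynomial identity $m_\alpha f_j=\sum_k c_{jk}f_{i_k^\alpha}$ gives the same expression modulo $t^{2n_k}I^2$; summing over $\alpha$ and using $\sum_\alpha\delta_\alpha m_\alpha=t^h$ (note: the sum, not ``$\delta_\alpha m_\alpha\equiv t^h$'' as you wrote) cancels $t^h f_j(\mbf{a}^{(k)})$.

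The gap is exactly where you wave it away. The computation above controls only $t^h\,\mbf{f}(\mbf{a}^{(k+1)})$: one gets $t^h\mbf{f}(\mbf{a}^{(k+1)})\subseteq t^{2n_k}I^2+t^h\,t^{2(n_k-h)}I^2\subseteq t^{2n_k-h}I^2$, whereas the ``quadratic improvement'' $\mbf{f}(\mbf{a}^{(k+1)})\subseteq t^{2(n_k-h)}I^2$ that drives your induction requires dividing by $t^h$, and no nonzerodivisor hypothesis on $t$ is available. Your proposed remedy is not an argument: two admissible choices of $\epsilon^{(k)}_\alpha$ differ by a vector $\eta\in t^{n_k-h}IR^N$ with $t^h\eta=0$, and this changes $\mbf{f}(\mbf{a}^{(k+1)})$ by $\nabla f_j\cdot\eta+\cdots$, which has no reason to lie in $t^{2(n_k-h)}I^2$; so the claim that the containment holds ``for any valid choice'' is unfounded, and no choice is exhibited for which it does. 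Nor can you just iterate the weaker $t^h$-multiplied bounds: the torsion factors accumulate, and in the limit you only learn $\mbf{f}(\mbf{b})\in\bigcap_k\bigl(t^{m_k}I:t^{kh}\bigr)$, which need not be zero (it contains $\operatorname{Ann}(t^h)$-type elements), so $\mbf{f}(\mbf{b})=0$ does not follow. If $t$ is a nonzerodivisor your sketch is essentially the standard Tougeron--Elkik iteration and can be completed; in the generality actually stated, handling this torsion is the real content of Elkik's lemma and is missing from your proposal. Two smaller repairs: at later iterates you only have $\sum_\alpha\delta_\alpha(\mbf{a}^{(k)})m_\alpha(\mbf{a}^{(k)})=t^h u_k$ with $u_k$ a unit congruent to $1$ modulo $t^{n-2h}$ (this uses $n>2h$ and $t\in\rad(R)$, not ``agreement modulo arbitrarily high powers of $t$''), and you must rescale the $m_\alpha$ by $u_k^{-1}$ to keep the exact cancellation at each step.
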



We are now able to prove the main theorem.


\begin{proof}(of Theorem \ref{main}) We may assume that $R$ is complete. Indeed, suppose the theorem is
proven for the complete local ring
$(\widehat{R},\mathfrak{m}\widehat{R})$. Then, in particular, there
is a function $\beta:\mathbb{N}\rightarrow\mathbb{N}$ satisfying:
for each $n\in\mathbb{N}$, each $\mbf{a}\in R^N$ such that
\begin{eqnarray*}
\mbf{f}(\mbf{a})&\subseteq& \mathfrak{m}^{\beta (n)}\widehat{R}\\
\mbf{f}(\mbf{a})+H_{\mbf{f}/\widehat{R}}(\mbf{a})&\supseteq&
(t_1,\ldots,t_{d-1})\widehat{R},
\end{eqnarray*}
there is $\widehat{\mbf{b}}\in\widehat{R}^N$ with
$\mbf{f}(\widehat{\mbf{b}})=0$ and $\mbf{a}\equiv\widehat{\mbf{b}}$
(mod $\mathfrak{m}^n\widehat{R}^N$). We claim that the same function
$\beta$ satisfies the conclusion of the theorem. Let $\mbf{a}\in
R^N$ be such that
\begin{eqnarray*}
\mbf{f}(\mbf{a})&\subseteq& \mathfrak{m}^{\beta (n)}R\\
\mbf{f}(\mbf{a})+H_{\mbf{f}/R}(\mbf{a})&\supseteq&
(t_1,\ldots,t_{d-1})R.
\end{eqnarray*}
Since $H_{\mbf{f}/\widehat{R}}\supseteq H_{\mbf{f}/R}\widehat{R}$,
the above relations remain true if we replace $R$ with
$\widehat{R}$. By the choice of $\beta$ there is
$\widehat{\mbf{b}}\in\widehat{R}^N$ with
$\mbf{f}(\mbf{\widehat{b}})=0$ and $\mbf{a}\equiv\widehat{\mbf{b}}$
(mod $\mathfrak{m}^n\widehat{R}^N$). Because $R$ is excellent and
Henselian, hence has AP, we can find $\mbf{b}\in R^N$ with
$\mbf{f}(\mbf{b})=0$ and $\mbf{a}\equiv \mbf{b}$ (mod
$\mathfrak{m}^n\widehat{R}^N$), hence $\mbf{a}\equiv \mbf{b}$ (mod $\mathfrak{m}^nR^N$).

We prove the theorem by induction on the dimension of $R$. The case $\dim R=0$ is trivial. For the case $\dim
R=1$ the result is clear due to Theorem~\ref{dim1}. Suppose $\dim R
>1$. We may assume that $t_1\in\mathfrak{m}$, otherwise the ideal $(t_1,\ldots,t_{d-1})$ will be the whole ring $R$ and then the theorem will follow from Theorem \ref{elkik2}.

Apply Lemma~\ref{elkik1} with $t=t_1, h=1$. Let $n_0$ be as in the
statement of that lemma and
 choose $s\geq n_0+2$. By the Artin-Rees Lemma there is $k$ such that $(t_1)^s\cap \mathfrak{m}^{n+k}\subseteq
(t_1)^s\mathfrak{m}^{n}$ for all $n>0$.

Let $(-)$ denote passing modulo $(t_1)^s$. Suppose
$H_{\mbf{f}/R}=(g_1,\ldots,g_M), g_i\in R[\mbf{X}].$ Introduce new
variables $\mbf{Y}=(Y_1,\ldots,Y_M), \mbf{Z}=(Z_1,\ldots,Z_r)$ and let
$$
G(\mbf{X},\mbf{Y},\mbf{Z})=Z_1f_1+\cdots+Z_rf_r+Y_1g_1+\ldots+Y_Mg_M-t_1\in R[\mbf{X},\mbf{Y},\mbf{Z}].
$$
Since $\dim\bar{R}<\dim R$, by the induction hypothesis there is a
linear function $\beta:\mathbb{N}\longrightarrow\mathbb{N}$
satisfying the following condition: for all $n\in \mathbb{N},
\mbf{a}\in R^N,\mbf{y}\in R^M, \mbf{z}\in R^r$ such that
\begin{eqnarray*}
\big(\mbf{\bar{f}}(\mbf{\bar{a}}),\bar{G}(\mbf{\bar{a}},\mbf{\bar{y}},\mbf{\bar{z}})\big)&\subseteq& \overline{\mathfrak{m}}^{\beta (n)}\\
\big(\mbf{\bar{f}}(\mbf{\bar{a}}),\bar{G}(\mbf{\bar{a}},\mbf{\bar{y}},\mbf{\bar{z}})\big)+H_{\mbf{\bar{f}},\bar{G}/\bar{R}}(\mbf{\bar{a}},\mbf{\bar{y}},\mbf{\bar{z}})&\supseteq&
(\bar{t}_2\ldots,\bar{t}_{d-1})^2,
\end{eqnarray*}
there is $\mbf{b}\in R^N,\mbf{u}\in R^M, \mbf{v}\in R^r$ such that
\begin{eqnarray*}
\mbf{\bar{f}}(\mbf{\bar{b}})&=&\bar{G}(\mbf{\bar{b}},\mbf{\bar{u}},\mbf{\bar{v}})=0\\
\mbf{\bar{a}}&\equiv& \mbf{\bar{b}}\text{ (mod }
\mathfrak{m}^n\bar{R}^N).
\end{eqnarray*}
We may assume that $\beta(n)>n$. Then we claim that the function $\beta_k:\mathbb{N}\rightarrow\mathbb{N},
\beta_k(n)=\beta (n+k)$ will satisfy the conclusion of the theorem.
Indeed, suppose for some $\mbf{a}\in R^N$ we have
\begin{eqnarray*}
\mbf{f}(\mbf{a})&\subseteq& \mathfrak{m}^{\beta (n+k)}\\
\mbf{f}(\mbf{a})+H_{\mbf{f}/R}(\mbf{a})&\supseteq&
(t_1,\ldots,t_{d-1}).
\end{eqnarray*}
The second inclusion implies that there is $\mbf{y}\in R^M,
\mbf{z}\in R^r$
 such that $G(\mbf{a},\mbf{y},\mbf{z})=0$. (Here we need $d>1$ which explains why we can't start the induction process from the dimension 0.) Thus by Lemma \ref{enlarge} we have relations
\begin{eqnarray*}
\big(\mbf{f}(\mbf{a}),G(\mbf{a},\mbf{y},\mbf{z})\big)&\subseteq& \mathfrak{m}^{\beta (n+k)}\\
\big(\mbf{f}(\mbf{a}),G(\mbf{a},\mbf{y},\mbf{z})\big)+H_{\mbf{f},G/R}(\mbf{a},\mbf{y},\mbf{z})&\supseteq&
(t_1,\ldots,t_{d-1})^2.
\end{eqnarray*}
Since $ H_{\mbf{\bar{f}},\bar{g}/\bar{R}} \supseteq
\overline{H}_{\mbf{f},g/R}$, all the relations here remain true
after replacing everything with its quotient modulo $(t_1)^s$. Thus
by the definition of $\beta$ there is $\mbf{b}^\prime\in R^N,
\mbf{u}\in R^M, \mbf{v}\in R^r$ such that
\begin{eqnarray*}
\mbf{\bar{a}}&\equiv& \mbf{\bar{b}}^\prime \text { mod } \mathfrak{m}^{n+k}\bar{R}^N\\
\mbf{f}(\mbf{\bar{b}}^\prime)&=&G(\mbf{\bar{b}}^\prime,\mbf{\bar{u}},\mbf{\bar{v}})=0.
\end{eqnarray*}
We may assume $\mbf{b}^\prime\equiv \mbf{a}$ mod
$\mathfrak{m}^{n+k}R^N$. The second relation of the above implies
$$
\mbf{f}(\mbf{b}^\prime)\subseteq (t_1)^s,
$$
and
$$
v_1f_1(\mbf{b}^\prime)+\cdots+v_rf_r(\mbf{b}^\prime)+u_1g_1(\mbf{b}^\prime)+\cdots+u_Mg_M(\mbf{b}^\prime)-t_1\in
(t_1)^s.
$$
It follows that
$$
(t_1)\subseteq
H_{\mbf{f}/R}(\mbf{b}^\prime)+\mbf{f}(\mbf{b}^\prime)+(t_1)^s\subseteq H_{\mbf{f}/R}(\mbf{b}^\prime)+(t_1)^s,
 $$
which implies $H_{\mbf{f}/R}(\mbf{b^\prime})\supseteq (t_1)$ using
Nakayama's lemma (note that $s\geq 2$). Since $\mbf{a}\equiv
\mbf{b}^\prime$ (mod $\mathfrak{m}^{n+k}R^N)$ we have
$$
f_i(\mbf{b}^\prime)\equiv f_i(\mbf{a}) \text{ (mod }\mathfrak{m}^{n+k}) \in \mathfrak{m}^{\beta (n+k)}\subseteq
\mathfrak{m}^{n+k}
$$
for all $i=1,\ldots,r$. Thus
$$
\mbf{f}(\mbf{b}^\prime)\subseteq (t_1)^s\cap\mathfrak{m}^{n+k}\subseteq (t_1)^s\mathfrak{m}^{n},
$$
by the choice of $k$. From Lemma~\ref{elkik1} it follows that there is $\mbf{b}\in R^N$ such that $\mbf{f}(\mbf{b})=0$
 and $\mbf{b}^\prime\equiv \mbf{b}$ mod $(t_1)^{s-1}\mathfrak{m}^{n}R^N\subseteq \mathfrak{m}^nR^N$.  Clearly $\mbf{b}\equiv \mbf{a}$
 (mod $\mathfrak{m}^nR^N$).
\end{proof}


\noindent\textbf{Example}. Let $(R,\mathfrak{m})$ be an excellent
Henselian local domain of dimension 2 containing a field (e.g
$R=k[[T_1, T_2]]$). Then the LAP holds for any single equation
$f=g(\mbf{X})+tY$, where $t\in R, t\neq 0, g\in R[\mbf{X}]$, since
$H_{(f)/R}\supset (t)$. Note that such a general statement can not
be true if we replace $Y$ by $Y^2$, due to a counterexample of G.
Rond in~\cite{rond1}.


\section{ Polynomials In One Variable}


In this section we study the linearity of the Artin functions of
systems of polynomial equations in only one variable over a reduced approximation ring. We would
 like to recall a well-known property of approximation rings that if an approximation (local) ring is
  reduced then its completion is also reduced.

We need some lemmas for the proof of the main result of this section.


\begin{lemma}\label{decomp}
Let $(R,\mathfrak{m})$ be an analytically irreducible local domain
and let $\mbf{f}=(f_1,\ldots,f_r)\subseteq R[X_1,\ldots,X_N]$.
Suppose for each $i\in\{1,\ldots,r\}$ the polynomial $f_i$ has a
decomposition $f_i=f_{i1}\cdots f_{ik_i}$ as a product of polynomials such that LAP holds for every system of
polynomials $(f_{1j_1},\ldots,f_{rj_r})$, $1\leq j_1 \leq
k_1,\ldots,1\leq j_r\leq k_r$. Then the LAP holds for the system
$\mbf{f}$.
\end{lemma}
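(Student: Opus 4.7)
The plan is to reduce the LAP problem for the composite system $\mbf{f}$ to the finitely many LAP hypotheses for the subsystems $(f_{1j_1},\ldots,f_{rj_r})$, $\mbf{j}\in J := \prod_i\{1,\ldots,k_i\}$, by means of an Izumi-type valuation estimate afforded by analytic irreducibility. For each $\mbf{j}\in J$ let $\beta_\mbf{j}$ denote a linear Artin bound for the subsystem $(f_{1j_1},\ldots,f_{rj_r})$ supplied by hypothesis, and set $\beta_0 := \max_{\mbf{j}\in J}\beta_\mbf{j}$; this is again linear since $J$ is finite.

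Step 1 (Izumi estimate). Since $R$ is analytically irreducible, $\widehat{R}$ is a Noetherian local domain, and Izumi's theorem provides constants $a\geq 1$, $b\geq 0$ (depending only on $R$) such that the asymptotic order function $\bar\nu_\mathfrak{m}(x):=\lim_{n\to\infty}\nu_\mathfrak{m}(x^n)/n$ satisfies $\bar\nu_\mathfrak{m}(x)\leq a\,\nu_\mathfrak{m}(x)+b$ for every nonzero $x\in R$, while being additive on products in $\widehat{R}$ (it is, up to a constant factor, the Rees valuation of $\mathfrak{m}$). Consequently, for any factorization $f=g_1\cdots g_k$ in $R$ with $\nu_\mathfrak{m}(f)\geq N$,
$$\sum_{j=1}^{k}\bar\nu_\mathfrak{m}(g_j)\;=\;\bar\nu_\mathfrak{m}(f)\;\geq\;\nu_\mathfrak{m}(f)\;\geq\;N,$$
so some $j$ satisfies $\bar\nu_\mathfrak{m}(g_j)\geq N/k$, whence $\nu_\mathfrak{m}(g_j)\geq N/(ak)-b/a$.

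Step 2 (Assembly). Let $k:=\max_i k_i$ and pick a linear function $\beta$ with $\beta(n)/(ak)-b/a\geq\beta_0(n)$ for all $n$ (e.g.\ $\beta(n)=ak\cdot\beta_0(n)+bk$). Suppose $\mbf{a}\in R^N$ satisfies $\mbf{f}(\mbf{a})\subseteq\mathfrak{m}^{\beta(n)}$. Applying Step 1 to each factorization $f_i(\mbf{a})=\prod_{j=1}^{k_i}f_{ij}(\mbf{a})$ produces an index $j_i$ with $f_{ij_i}(\mbf{a})\in\mathfrak{m}^{\beta_0(n)}$. Then the tuple $\mbf{j}=(j_1,\ldots,j_r)\in J$ makes $\mbf{a}$ an approximate solution of order $\beta_0(n)\geq\beta_\mbf{j}(n)$ for the subsystem $(f_{1j_1},\ldots,f_{rj_r})$, so the LAP hypothesis yields $\mbf{b}\in R^N$ with $f_{ij_i}(\mbf{b})=0$ for all $i$ and $\mbf{a}\equiv\mbf{b}\pmod{\mathfrak{m}^nR^N}$. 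Since for each $i$ one factor of the product $f_i(\mbf{b})=\prod_{j}f_{ij}(\mbf{b})$ vanishes, $\mbf{f}(\mbf{b})=0$, establishing LAP for $\mbf{f}$ with the linear bound $\beta$.

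The main obstacle is the Izumi step: it must be proved (or cited) that in the analytically irreducible case the $\mathfrak{m}$-adic order of a product is, up to a linear distortion, the sum of the orders of the factors. This is exactly where the hypothesis that $\widehat{R}$ is a domain is used — without it one can manufacture elements that are very small against one branch of $\widehat{R}$ but whose products land in arbitrarily high powers of $\mathfrak{m}$, so no finite-to-one combinatorial choice of factors would suffice. Once Izumi's inequality is in hand the rest of the argument is a mechanical combination of the finitely many subsystem bounds.
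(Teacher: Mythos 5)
Your Step 2 is exactly the paper's argument: pick, for each $i$, a factor $f_{ij_i}(\mathbf{a})$ of large $\mathfrak{m}$-adic order, apply the linear bound of the subsystem $(f_{1j_1},\ldots,f_{rj_r})$, and observe that any exact solution of the subsystem kills each product $f_i$. The whole weight of the lemma therefore rests on the product estimate in Step 1, and that is where your proposal has a genuine gap. The function $\bar\nu_{\mathfrak{m}}$ is \emph{not} additive on products in general: it equals $\min_i v_i(x)/v_i(\mathfrak{m})$ taken over the Rees valuations $v_i$ of $\mathfrak{m}$, and $\mathfrak{m}$ may have several Rees valuations even when $\widehat{R}$ is a domain. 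A minimum of (normalized) valuations is only superadditive, $\bar\nu_{\mathfrak{m}}(xy)\geq\bar\nu_{\mathfrak{m}}(x)+\bar\nu_{\mathfrak{m}}(y)$, so your displayed chain $\sum_j\bar\nu_{\mathfrak{m}}(g_j)=\bar\nu_{\mathfrak{m}}(f)\geq N$ fails: the inequality between $\bar\nu_{\mathfrak{m}}(f)$ and $\sum_j\bar\nu_{\mathfrak{m}}(g_j)$ goes the wrong way, and you cannot conclude that some factor has large order. (Also, the inequality $\bar\nu_{\mathfrak{m}}\leq a\,\nu_{\mathfrak{m}}+b$ you quote is really Rees's theorem for analytically unramified rings, with $a=1$; Izumi's theorem proper compares two divisorial valuations with each other.)

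The step can be repaired, and the repair recovers exactly what the paper cites. Fix a single divisorial (e.g.\ Rees) valuation $v$ centered on $\mathfrak{m}$; $v$ \emph{is} additive, so $f_i(\mathbf{a})\in\mathfrak{m}^{N}$ gives $\sum_j v(f_{ij}(\mathbf{a}))\geq N\,v(\mathfrak{m})$ and hence some factor with $v$ large (factors that vanish can be chosen outright). Then convert the lower bound on $v$ back into a lower bound on the $\mathfrak{m}$-adic order: by Izumi's theorem all divisorial valuations centered on $\mathfrak{m}$ of the analytically irreducible domain $R$ are linearly comparable, so $v$ large forces $\bar\nu_{\mathfrak{m}}$ large, and Rees's theorem ($\overline{\mathfrak{m}^{n+c}}\subseteq\mathfrak{m}^{n}$) then forces $\nu_{\mathfrak{m}}$ large, all with linear losses. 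This combined statement is precisely Theorem 4.3 of Swanson's \emph{Linear equivalence of ideal topologies}, which the paper invokes directly: there is a constant $c$ such that $z_1\cdots z_k\in\mathfrak{m}^{cn}$ implies some $z_i\in\mathfrak{m}^{n}$. With that citation (or the Izumi--Rees argument just sketched) in place of your additivity claim, your assembly step goes through verbatim and yields the same linear bound as the paper's proof.
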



\begin{proof}Let $\beta_{j_1\ldots j_r}$ be a linear bound for the Artin
function of the system $f_{1j_1},\ldots,f_{rj_r}$ for each $1\leq
j_1\leq k_1,\ldots, 1\leq j_r\leq k_r$. Let $k=\max_{i=1\ldots
r}\{k_i\}$. By \cite[Theorem 4.3]{swanson1}, there is a constant $c$
such that for every $n$, whenever $z_1z_2\cdots
z_k\in\mathfrak{m}^{cn}$ with $z_i\in R, i=1,\ldots,k,$ we can find
$i$ such that $z_i\in\mathfrak{m}^n$. We claim that the function
$\beta =c(\max\{\beta_{j_1\ldots j_r}\})$ is a linear bound for the
Artin function of the system $\mbf{f}$. Indeed, suppose for some
$n\in\mathbb{N}$ and some $\mbf{a}\in R^N$ we have $f_i(\mbf{a})\in
\mathfrak{m}^{c(\max\{\beta_{j_1\ldots j_r}\})(n)}$ for all $i$.
Since $f_i=f_{i1}\cdots f_{ik_i}$ and by the choice of $c$, for each
$i$ there is $j_i\in\{1,\ldots,k_i\}$ such that
$f_{ij_i}(\mbf{a})\in\mathfrak{m}^{\max\{\beta_{j_1\ldots
j_r}\}(n)}$. Hence the collection $\{j_1,\ldots,j_r\}$ has the
property that
$f_{ij_i}(\mbf{a})\in\mathfrak{m}^{\beta_{j_1\ldots j_r}(n)}$ for
all $i=1,\ldots, r$. By the definition of $\beta$ we can find
$\mbf{b}\in R^N$ such that $\mbf{a}\equiv\mbf{b}$ mod
$\mathfrak{m}^nR^N$, and $f_{ij_i}(\mbf{b})=0, i=1,\ldots, r$, which
also satisfies $f_i(\mbf{a})=0$ for all $i$.
\end{proof}
It was proved in \cite[Theorem 3.1]{rond2} that every system of homogeneous linear
equations over a Noetherian local ring has LAP. The same is true for any system of linear equations.


\begin{lemma}\label{linear}
Let $(R,\mathfrak{m})$ be a Noetherian local ring. Then every system
of linear equations over $R$ has LAP.
\end{lemma}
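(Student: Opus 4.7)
The plan is to reduce to the homogeneous case, which is covered by \cite[Theorem 3.1]{rond2} as noted just before the statement. The key trick is the standard homogenization device: if we want to solve $AX=c$ (where $A=(a_{ij})$ is an $r\times N$ matrix with entries in $R$ and $c=(c_1,\ldots,c_r)\in R^r$), we introduce an additional variable $Y$ and consider instead the homogeneous system $AX - cY = 0$ in $N+1$ variables.

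First, I would set up the reduction as follows. By the homogeneous case, there is a linear function $\beta$ that bounds the Artin function of the system $\{\sum_{j=1}^N a_{ij}X_j - c_iY\}_{i=1,\ldots,r}$. I claim the same function $\beta$ works for the original inhomogeneous system $AX=c$. Indeed, suppose $\mbf{a}=(a_1,\ldots,a_N)\in R^N$ satisfies $\sum_j a_{ij}a_j - c_i \in \mathfrak{m}^{\beta(n)}$ for all $i$. Then the tuple $(a_1,\ldots,a_N,1)\in R^{N+1}$ is an approximate solution of order $\beta(n)$ to the homogeneous system, so by the definition of $\beta$ we obtain $(b_1,\ldots,b_N,y)\in R^{N+1}$ with $\sum_j a_{ij}b_j - c_i y = 0$ for all $i$, and with $b_j\equiv a_j$, $y\equiv 1$ modulo $\mathfrak{m}^n$.

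The second step is to convert this into an actual solution of $AX=c$. Here I use that $(R,\mathfrak{m})$ is local: since $y\equiv 1 \pmod{\mathfrak{m}^n}$ and $n\geq 1$, $y$ is a unit in $R$. Setting $b_j' = b_j/y$ gives $\sum_j a_{ij}b_j' = c_i$ for all $i$. To check the congruence, write $b_j' - b_j = b_j(1-y)/y$; since $1-y\in\mathfrak{m}^n$, we get $b_j'\equiv b_j\equiv a_j \pmod{\mathfrak{m}^n}$, as required.

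There is essentially no obstacle here, since the homogeneous statement is cited and the homogenization trick is elementary and relies only on the fact that elements congruent to $1$ modulo $\mathfrak{m}$ are units. The only point to note in the write-up is that the same linear function $\beta$ from the homogeneous system transfers without change to the inhomogeneous system, so linearity is preserved.
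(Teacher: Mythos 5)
Your proof is correct and is essentially identical to the paper's: the same homogenization with an extra variable $Y$, the same appeal to Rond's linear bound for homogeneous systems, and the same observation that the approximate value of $Y$ is a unit congruent to $1$, allowing division to recover a solution of the inhomogeneous system with the same linear bound.
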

\begin{proof}Let $\mbf{f}=(f_i=c_{i1}X_1+\cdots+c_{iN}X_N+d_i|\quad
i=1,\ldots,r)$ be a system of linear equations over $R$. Let $Y$ be
a new variable and define $g_i=c_{i1}X_1+\cdots+c_{iN}X_N+d_iY,
i=1,\ldots,r$. Then $\mbf{g}=(g_1,\ldots,g_r)$ is a system of
homogeneous equations over $R$ in $N+1$ variables
$X_1,\ldots,X_N,Y$. By Theorem 3.1 in~\cite{rond2}, there is a
linear bound $\beta$ for the Artin function of this system. Then
this is also a bound for the Artin function of $\mbf{f}$. Indeed,
let $n\in\mathbb{N}$ and $\mbf{a}=(a_1,\ldots,a_N)\in R^N$ be such
that $\mbf{f}(\mbf{a})\subseteq\mathfrak{m}^{\beta(n)}$. Hence
$\mbf{g}(\mbf{a},1)\subseteq\mathfrak{m}^{\beta(n)}$. Then there is
$(b_1,\ldots,b_N,u)\in R^{N+1}$ such that $\mbf{g}(\mbf{b},u)=0$ and
$(\mbf{a},1)\equiv (\mbf{b},u)$ (mod $\mathfrak{m}^nR^{N+1}$). That
implies that $u$ is a unit in $R$. This finishes the proof of the
lemma since $\mbf{f}(u^{-1}\mbf{b})=0$ and $\mbf{a}\equiv
u^{-1}\mbf{b}$ (mod $\mathfrak{m}^nR^N$).
\end{proof}


The main result of this section is the following theorem whose proof is inspired by the work of
Rond in \cite{rond2}.


\begin{theorem}\label{onevariable}  Let $(R,\mathfrak{m})$ be an excellent Henselian reduced local
 ring. Then the Artin functions associated to systems of polynomials in one variable are bounded by linear functions.
\end{theorem}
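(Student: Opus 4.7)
First, by Lemma \ref{complete} and the property (recalled at the start of this section) that the completion of an excellent Henselian reduced local ring is reduced, I may assume $R$ is a complete reduced local ring. Let $\mathfrak{p}_1,\ldots,\mathfrak{p}_s$ be its finitely many minimal primes; each quotient $R/\mathfrak{p}_j$ is a complete local domain, hence analytically irreducible. I would then reduce LAP for $\mbf{f}$ over $R$ to LAP for $\mbf{f}$ over each $R/\mathfrak{p}_j$ by a successive-correction argument: given a solution $\bar{b}_j$ over $R/\mathfrak{p}_j$, lift it to $R$, observe that $\mbf{f}$ evaluated at the lift lies in $\mathfrak{p}_j$ together with a controlled power of $\mathfrak{m}$, and use the Artin--Rees number for the filtration $\{\mathfrak{p}_j\cap\mathfrak{m}^n\}$ to pass to the next quotient and iterate. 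The linear loss accumulates only by a constant factor depending on $s$ and the Artin--Rees numbers, so linearity is preserved.

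In the analytically irreducible complete local domain case, I would apply Lemma \ref{decomp} to the factorization obtained by polynomial long division:
\[
f_i(X) = (X - \alpha_{i,1})^{e_{i,1}} \cdots (X - \alpha_{i,k_i})^{e_{i,k_i}} \cdot h_i(X),
\]
where the $\alpha_{i,k}\in R$ are the distinct roots of $f_i$ in $R$ with multiplicities $e_{i,k}$ and $h_i(X)\in R[X]$ has no root in $R$. Lemma \ref{decomp} then reduces LAP for $\mbf{f}$ to LAP for every system obtained by selecting one factor from each $f_i$. If the chosen factors are all linear, the resulting system is linear in $X$ and Lemma \ref{linear} yields LAP. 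Otherwise, the system contains some $h_i$ with no root in $R$, and LAP holds vacuously once we establish the following key claim: there exists a constant $C$ such that $h_i(a)\notin\mathfrak{m}^{C}$ for every $a\in R$. Granting the claim, the choice $\beta(n)=\max(C,n)$ ensures that the hypothesis $h_i(a)\in\mathfrak{m}^{\beta(n)}$ is never satisfied.

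The hard part will be this key claim. In the one-dimensional DVR case it is classical, essentially Krasner's lemma. In higher dimension, I would pass to the normalization $\tilde{R}$ of $R$ inside a finite splitting extension of the fraction field of $R$ (module-finite over $R$ by excellence and Henselianness), factor $h_i(X)=c\prod_{j}(X-\beta_j)$ in $\tilde{R}[X]$, and observe that no $\beta_j$ lies in $R$. Izumi's theorem on the linear comparability of the $\mathfrak{m}$-adic valuation with any Rees valuation on an analytically irreducible excellent local ring then supplies a uniform \L{}ojasiewicz-type lower bound for $\mathrm{ord}_{\mathfrak{m}}(a-\beta_j)$ as $a$ ranges over $R$, and hence for $\mathrm{ord}_{\mathfrak{m}}(h_i(a))$. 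Combining this uniform bound with the inductive patching across the minimal primes will constitute the technical core of the argument.
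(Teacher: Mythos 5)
Your reduction to the complete reduced case and your treatment of the domain case are essentially the paper's: factor each $f_i$, invoke Lemma \ref{decomp} for the selections, handle all-linear selections by Lemma \ref{linear}, and handle selections containing a rootless factor by a uniform bound on $\operatorname{ord}_{\mathfrak{m}}h_i(a)$. (Two small remarks there: the paper factors into irreducibles over $Q(R)$ and clears denominators with a single multiplier $E$, while you split off the roots lying in $R$; both work. More importantly, your ``hard part'' is not hard: if no constant $C$ existed with $h_i(a)\notin\mathfrak{m}^{C}$ for all $a\in R$, then the strong approximation property of the excellent Henselian ring $R$ would produce an actual root of $h_i$ in $R$, a contradiction. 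The normalization/Izumi machinery you propose is unnecessary, and as sketched it has loose ends --- non-monic $h_i$, roots not integral over $R$ --- though it could probably be completed.)

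The genuine gap is your passage from the domain components back to $R$. The ``successive-correction'' scheme does not work as described: after producing $b_1$ with $\mbf{f}(b_1)\subseteq P_1$ and $b_1\equiv a$ modulo a power of $\mathfrak{m}$, any further correction aimed at making $\mbf{f}$ vanish modulo $P_2$ must be taken inside $P_1$ in order not to destroy vanishing modulo $P_1$; but the discrepancy $a-b_1$ is only known to be $\mathfrak{m}$-adically small, not approximable by elements of $P_1$, so the constrained system over $R/P_2$ need not have a high-order approximate solution, and the Artin--Rees numbers for $\{P_j\cap\mathfrak{m}^n\}$ do not resolve this. The assertion that the loss ``accumulates only by a constant factor'' is exactly what needs proof, and your patching argument never uses the one-variable hypothesis, which is where the theorem's content lies. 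The paper's mechanism is different and essential: over each component $R/P_i$ the one-variable system is either identically zero or has only \emph{finitely many} roots, so by the (linear) domain case an approximate solution $a$ is forced to be close, modulo $P_i+\mathfrak{m}^{dn}$, to one member $a_i$ of a fixed finite set $S$ of lifted roots; the compatibility of these choices across components is then encoded in finitely many \emph{linear} systems $a_i+\sum_j p_{ij}Y_{ij}-a_l-\sum_j p_{lj}Y_{lj}$ (the $p_{ij}$ generating $P_i$), whose LAP (Lemma \ref{linear}) produces a single element $b=a_i+\sum_j p_{ij}b'_{ij}$, independent of $i$, with $b\equiv a\ (\mathrm{mod}\ \mathfrak{m}^n)$ and $\mbf{f}(b)\in\bigcap_i P_i=0$. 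Without some such device for producing a compatible system of component-wise roots, your argument does not close.
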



\begin{proof} By the remark at the beginning of this section, we
may assume that $(R,\mf{m})$ is complete.
We first prove the theorem for the case $R$ is a domain, hence it
is analytically irreducible. Let $(f_1,\ldots,f_r)\subseteq R[X]$ be a
system of polynomials in one variable $X$. View them as polynomials
over the quotient field $Q(R)$ of $R$. Then for each $i$ the
polynomial $f_i$ has a decomposition $f_i=f_{i1}\cdots f_{ik_i}$ in
to irreducible polynomials over $Q(R)$. Since we have only a finite
number of polynomials, there is $E\in R, E\neq 0$ such that
$Ef_{ij}\in R[X]$ for all $i, j$. Suppose we can prove that each
system $(Ef_{1j_1},\ldots,Ef_{rj_r})$, $1\leq j_1 \leq
k_1,\ldots,1\leq j_r\leq k_r$ has LAP then by the lemma, the system
$E^{k_1}f_1,\ldots,E^{k_r}f_r$ has LAP, which clearly implies that
the system $(f_1,\ldots,f_r)$ has LAP. Therefore, we may assume that
each $f_i$ is irreducible in $Q(R)[X]$. It follows that for each $i$,
either $f_i$ is linear, or $f_i$ has no
solution in the quotient ring of $R$, in particular in $R$. If for
each $i=1,\ldots, r$ the polynomial $f_i$ is linear then the Artin
function of the system $\mbf{f}$ is bounded by a linear function by
Lemma \ref{linear}. If there is some $i$ such that $f_i$ has no
solution in $R$, then there is a positive integer $c$ such that for
all $a\in R$ we have $f_i(a)\not\in\mf{m}^c$. Indeed, if that number
$c$ does not exist, then since $R$ has the strong approximation
property, there would be a solution in $R$ for $f_i$ which is a
contradiction. Now it is clear that the constant function $\beta(n)=c$
is a bound for the Artin function of the system $\mbf{f}$.

Let us consider a complete reduced local ring $(R,\mf{m})$. Let
$P_1,\ldots,P_k$ be all minimal primes in $R$. Let
$\mbf{f}=(f_1,\ldots,f_r)\subseteq R[X]$. In $R/P_i$ the system
$\mbf{f}$ has either only finite number of roots or is identically
equal to 0. For each $i$ for which the system $\mbf{f}$ is not
identically equal to 0 in $R/P_i$ we consider a set of some elements
in $R$ whose images in $R/P_i$ form a complete set of all roots of
$\mbf{f}$. Let S be the union of all such sets. Then $S$ is finite.
For each $i$ let $P_i=(p_{i1},\ldots,p_{it})$.

By the previous part, there is a constant $c\in\mathbb{N}$
such that the linear function $n\mapsto cn, n\in\mathbb{N}$, is a
common linear bound for Artin functions of $\mbf{f}$ in $R/P_i$ for
every $i$. Let $Y_{ij}, i=1,\ldots,k, j=1,\ldots,t$ be new variables and
consider all possible systems of linear equations over $R$, each
equation is of the form $\{u_i+\sum_j p_{ij}Y_{ij}-u_l-\sum_j
p_{lj}Y_{lj}, u_i, u_l\in S|\quad j=1,\ldots,t\}$. The above lemma
assures that for each such system there exists a linear bound for its
Artin function. There are only finitely many such systems. Hence
we can find $d\in\mathbb{N}$ such that the function $n\mapsto dn,
n\in\mathbb{N}$, is a bound for the Artin function of every system
of this kind. We claim that the function $\beta\colon n\mapsto cdn,
n\in\mathbb{N}$, is a bound for the Artin function of the system
$\mbf{f}$ in $R$.

Let $a\in R$ be such that $\mbf{f}(a)\subseteq \mathfrak{m}^{cdn}$.
Passing modulo $P_i$ we get that, by the definition of $c$, for each $i$ there is $a_i\in S$
such that $\mbf{f}(a_i)\subseteq P_i$ and $a\equiv a_i$ (mod
$P_i+\mathfrak{m}^{dn})$. Then there is $\mbf{u}=(u_{ij})\in R^{kt}$
such that
$$a\equiv a_i+\sum_{j=1}^tp_{ij}u_{ij} \text{ (mod }\mathfrak{m}^{dn}).$$

Define
 $$L_{il}(\mbf{Y})=a_i+\sum_j p_{ij}Y_{ij}-a_l-\sum_j p_{lj}Y_{lj}$$
 for all $i=1,\ldots,k,l=1,\ldots,t$. Then $L_{il}(\mbf{u})\in \mathfrak{m}^{dn}$
 for all $i,l$. By the choice of $d$, there is
$\mbf{b}^\prime=(b_{ij}^\prime)\in R^{kt}$ such that
$L_{il}(\mbf{b}^\prime)=0$ and $\mbf{u}\equiv\mbf{b}^\prime$ (mod
$\mathfrak{m}^nR^{kt}$). Define
$$b=a_i+\sum_j p_{ij}b_{ij}^\prime, \text{ for some } i.$$
This definition does not depend on $i$. Clearly $b\equiv a$ (mod
$\mathfrak{m}^n$). We need to show that $\mbf{f}(b)=0$. This is
true, since $\mbf{f}(b)\equiv \mbf{f}(a_i)\equiv 0$ (mod $P_i$) for
all $i$, and $\cap_i P_i=0$.
\end{proof}


\section{Explicit Calculations Of Artin Functions}


\subsection{Monomial ideals}


We shall show that over a regular local ring the Artin functions
associated to systems of monomial ideals are actually linear.

Let $(R,\mathfrak{m})$ be a local domain and
$\mbf{X}=X_1,\ldots,X_N$ be indeterminates over $R$. For each
$\alpha=(i_1,i_2,\ldots,i_N)\in\mathbb{Z}^N_{\geq 0}$ define
$|\alpha|=i_1+\cdots+i_N$, $\supp (\alpha)=\{j: i_j>0\}$ and
$\mbf{X}^\alpha=X_1^{i_1}\cdots X_r^{i_N}$.


\begin{lemma}\label{max}
Let $(R,\mathfrak{m})$ be a local ring and $\alpha_1,\ldots,\alpha_k$
vectors in $\mathbb{Z}^N_{\geq 0}$. With
the notation as above we have
$$
\beta_n(\mbf{X}^{\alpha_1},\ldots,\mbf{X}^{\alpha_k})\leq\max_{1\leq i\leq k}\beta_n(\mbf{X}^{\alpha_i}),
$$
for all $n\in\mathbb{N}$.
\end{lemma}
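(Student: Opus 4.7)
Set $\beta_{*}:=\max_{1\le i\le k}\beta_{n}(\mbf{X}^{\alpha_{i}})$, and let $\mbf{a}=(a_{1},\dots,a_{N})\in R^{N}$ be any vector satisfying $\mbf{X}^{\alpha_{j}}(\mbf{a})\in\mf{m}^{\beta_{*}}$ for every $j=1,\dots,k$. The goal is to produce $\mbf{b}\in R^{N}$ with $\mbf{X}^{\alpha_{j}}(\mbf{b})=0$ for all $j$ and $\mbf{b}\equiv\mbf{a}\pmod{\mf{m}^{n}R^{N}}$.

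First, I apply the Artin function of each single monomial separately. Since $\beta_{*}\ge\beta_{n}(\mbf{X}^{\alpha_{j}})$, the very definition of $\beta_{n}(\mbf{X}^{\alpha_{j}})$ supplies some $\mbf{b}^{(j)}\in R^{N}$ with $\mbf{b}^{(j)}\equiv\mbf{a}\pmod{\mf{m}^{n}R^{N}}$ and $\prod_{i}(b^{(j)}_{i})^{\alpha_{j,i}}=0$. Invoking that $R$ is a local domain (the standing hypothesis of this subsection), the vanishing of this product forces at least one factor to be zero, so for each $j$ I may choose an index $i_{j}\in\supp(\alpha_{j})$ with $b^{(j)}_{i_{j}}=0$. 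The congruence $b^{(j)}_{i_{j}}\equiv a_{i_{j}}\pmod{\mf{m}^{n}}$ then yields $a_{i_{j}}\in\mf{m}^{n}$.

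It remains to splice these local solutions into a single one. Put $T:=\{i_{1},\dots,i_{k}\}$; by construction $T$ is a transversal of the supports, in the sense that $T\cap\supp(\alpha_{j})\ne\emptyset$ for every $j$. Define $b_{i}:=0$ if $i\in T$ and $b_{i}:=a_{i}$ otherwise. Because $a_{i}\in\mf{m}^{n}$ for every $i\in T$, one has $\mbf{b}\equiv\mbf{a}\pmod{\mf{m}^{n}R^{N}}$, and the equality $b_{i_{j}}^{\alpha_{j,i_{j}}}=0$ for the distinguished index $i_{j}\in\supp(\alpha_{j})$ kills the product $\mbf{X}^{\alpha_{j}}(\mbf{b})=\prod_{i}b_{i}^{\alpha_{j,i}}$. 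This gives $\beta_{n}(\mbf{X}^{\alpha_{1}},\dots,\mbf{X}^{\alpha_{k}})\le\beta_{*}$, as required.

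The only real subtlety to flag is the passage from ``$\prod_{i}(b^{(j)}_{i})^{\alpha_{j,i}}=0$'' to ``some $b^{(j)}_{i_{j}}=0$''; this is where the domain hypothesis on $R$ is essential. Outside the domain setting the transversal argument collapses, and one would have to pass to the minimal primes of $R$ and glue their partial solutions together, in the style of the proof of Theorem~\ref{onevariable}. Within the present local-domain framework, however, the combinatorial step above is essentially bookkeeping.
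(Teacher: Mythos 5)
Your proof is correct and follows essentially the same route as the paper: use the single-monomial Artin functions to find, for each $\alpha_j$, an index in $\supp(\alpha_j)$ where the coordinate of $\mbf{a}$ lies in $\mf{m}^n$ (valid because $R$ is a domain in this subsection, a point the paper leaves implicit and you rightly flag), then zero out those coordinates to get a common solution. The only cosmetic difference is that you pick one index per monomial while the paper selects the indices greedily, skipping monomials whose support is already hit; the resulting argument is the same.
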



\begin{proof}
Let $\mbf{a}=(a_1,\ldots,a_N)\in R^N $ be such that
$$
(\mbf{a}^{\alpha_1},\ldots,\mbf{a}^{\alpha_k})\in\mathfrak{m}^{\max\beta_n(\mbf{X}^{\alpha_i})}.
$$
From
$\mbf{a}^{\alpha_1}\in\mathfrak{m}^{\max\beta_n(\mbf{X}^{\alpha_i})}\subseteq\mathfrak{m}^{\beta_n(\mbf{X}^{\alpha_1})}$,
there is $j_1\in$ supp$(\alpha_1)$ such that
$a_{j_1}\in\mathfrak{m}^n$. Consider the set of all $\alpha_i$ whose
supports do not contain $j_1$ and without loss of generality we may
assume that $\alpha_2$ is one of them. Again there is $j_2\in$
supp$(\alpha_2)$ such that $a_{j_2}\in\mathfrak{m}^n$. Consider the
set of all $\alpha_i$ whose supports do not contain $j_1$ or $j_2$, and so on.
Iterating the process we can find a collection of indices
$\{j_1,j_2,\ldots,j_s\}$ for which every $\alpha_i$ has its support
containing one of these indices. Now we define
$$
b_j=
\begin{cases}
0 & \text{if $j\in\{j_1,\ldots,j_s\}$}\\
a_j& \text{otherwise}.\\
\end{cases}
$$
Clearly $\mbf{b}=(b_1,\ldots,b_N)\in R^N$ is a solution of the
system $\mbf{X}^{\alpha_1},\ldots,\mbf{X}^{\alpha_k}$ and
$\mbf{a}\equiv\mbf{b}$ (mod $\mathfrak{m}^n$).
\end{proof}


\begin{lemma}\label{1mono}

(i) Let $(R,\mathfrak{m})$ be an analytically irreducible local
domain. Then for each $\alpha\in\mathbb{Z}_{\geq 0}^N$ there is
$c\in \mathbb{N}$ not depending on $n$ such that
$\beta_n(\mbf{X}^\alpha)\leq cn$ for all $n$.

(ii) If $(R,\mathfrak{m})$ is a regular local ring then
$\beta_n(\mbf{X}^\alpha)=|\alpha|n-|\alpha|+1$ for all
$n\in\mathbb{N}$.
\end{lemma}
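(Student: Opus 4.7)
For part (i) the plan is to reuse the ingredient already invoked in Lemma~\ref{decomp}: by~\cite[Theorem 4.3]{swanson1}, since $R$ is analytically irreducible there is a constant $c$, depending on $|\alpha|$ but not on $n$, such that whenever a product of $|\alpha|$ elements of $R$ lies in $\mathfrak{m}^{cn}$, at least one of the factors lies in $\mathfrak{m}^n$. Given $\mbf{a}\in R^N$ with $\mbf{a}^{\alpha}\in\mathfrak{m}^{cn}$, view $\mbf{a}^{\alpha}$ as a product of $|\alpha|$ many factors drawn from $\{a_j:j\in\supp(\alpha)\}$ (each $a_j$ appearing $i_j$ times); then some $a_{j_0}$ with $j_0\in\supp(\alpha)$ lies in $\mathfrak{m}^n$. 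Setting $b_{j_0}=0$ and $b_j=a_j$ for $j\ne j_0$ gives a solution $\mbf{b}$ of $\mbf{X}^{\alpha}=0$ with $\mbf{a}\equiv\mbf{b}$ mod $\mathfrak{m}^{n}$, so $\beta_n(\mbf{X}^{\alpha})\le cn$.

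For part (ii), the central observation is that when $R$ is regular the associated graded ring $\operatorname{gr}_{\mathfrak{m}}(R)$ is a polynomial ring over the residue field, hence an integral domain. Consequently the $\mathfrak{m}$-adic order $\mathrm{ord}\colon R\setminus\{0\}\to\mathbb{N}$ is additive: $\mathrm{ord}(xy)=\mathrm{ord}(x)+\mathrm{ord}(y)$. The upper bound $\beta_n(\mbf{X}^{\alpha})\le |\alpha|n-|\alpha|+1$ then follows by pigeonhole: if $\mbf{a}^{\alpha}\in\mathfrak{m}^{|\alpha|n-|\alpha|+1}$, then $\sum_{j\in\supp(\alpha)} i_j\,\mathrm{ord}(a_j)\ge |\alpha|n-|\alpha|+1$, which rules out all $a_j$ with $j\in\supp(\alpha)$ having order $\le n-1$ (otherwise the sum would be at most $|\alpha|(n-1)$). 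So some $a_{j_0}$ lies in $\mathfrak{m}^{n}$ and the construction from part (i) produces the desired~$\mbf{b}$.

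For the matching lower bound, I would exhibit for each $n$ a near-solution that cannot be corrected to a true one. Pick $t\in\mathfrak{m}\setminus\mathfrak{m}^{2}$ (which exists because $R$ is regular of positive dimension; the $0$-dimensional case is a field and can be checked directly), and define $a_j=t^{n-1}$ for $j\in\supp(\alpha)$ and $a_j=0$ otherwise. By additivity of $\mathrm{ord}$, $\mbf{a}^{\alpha}=t^{(n-1)|\alpha|}\in\mathfrak{m}^{|\alpha|n-|\alpha|}$. If some $\mbf{b}\in R^{N}$ satisfied $\mbf{b}^{\alpha}=0$ with $\mbf{a}\equiv\mbf{b}$ mod $\mathfrak{m}^{n}$, then since $R$ is a domain some $b_{j_0}=0$ for $j_{0}\in\supp(\alpha)$; but then $t^{n-1}\equiv 0\pmod{\mathfrak{m}^{n}}$, contradicting $\mathrm{ord}(t^{n-1})=n-1$. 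The main obstacle is precisely this last step: identifying the explicit witness that makes the bound sharp is what regularity is used for. Once the valuative behaviour of $\mathrm{ord}$ on $R$ is in hand, both inequalities reduce to elementary counting.
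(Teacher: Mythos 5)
Your proposal is correct and follows essentially the same route as the paper: part (i) is the same application of Swanson's linear-equivalence theorem (you invoke the several-factor form directly, the paper iterates the two-factor form, which only changes the constant), and part (ii) uses exactly the paper's argument, namely additivity of the $\mathfrak{m}$-adic order in a regular local ring for the upper bound and a near-solution whose entries have order $n-1$ (your $t^{n-1}$, the paper's arbitrary elements of $\mathfrak{m}^{n-1}\setminus\mathfrak{m}^{n}$) for the matching lower bound. No substantive difference; both arguments tacitly assume $\dim R\geq 1$ in the sharpness step, which you at least flag explicitly.
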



\begin{proof}
 (i) Since $(R,\mathfrak{m})$ is an analytically
irreducible domain, by \cite[Theorem 4.3]{swanson1} there is an
integer $c^\prime>0$ such that for every $n$ if
$uv\in\mathfrak{m}^{c^\prime n}$ then either $u\text{ or } v$
belongs to $\mathfrak{m}^n$. We claim that
$c=(c^\prime)^{N+|\alpha|}$ will satisfy the conclusion of (i).
Indeed, let $\mbf{a}=(a_1,\ldots,a_N)\in R^N$ be such that
$\mbf{a}^{\alpha}\in\mathfrak{m}^{c n}$. Then by the property of
$c^\prime$, there is $j\in\supp (\alpha)$ with
$a_j\in\mathfrak{m}^n$. Define $\mbf{b}=(a_1,\ldots,0,\ldots,a_N)$,
where $0$ is at the $j^{th}$-place,  which satisfies
$\mbf{b}^\alpha=0$ and $\mbf{a}\equiv\mbf{b}$ (mod
$\mathfrak{m}^n$).

(ii) Let $\mbf{a}\in R^N$ be such that
$\mbf{a}^\alpha\in\mathfrak{m}^{|\alpha|n-|\alpha|+1}$. Since $R$ is
regular, there is $i\in$ supp($\alpha$) such that
$a_i\in\mathfrak{m}^n$. Then define
$\mbf{b}=(a_1,\ldots,0,\ldots,a_N)$ where $0$ is at the
$i^{\text{th}}$-place. Clearly $\mbf{b}$ satisfies
$\mbf{b}^\alpha=0$ and $\mbf{a}\equiv\mbf{b}$ (mod
$\mathfrak{m}^n$). Thus we have the inequality
$\beta_n(\mbf{X}^\alpha)\leq |\alpha|n-|\alpha|+1$.

Conversely, consider a vector $\mbf{a}\in R^N$ with
$a_i\in\mathfrak{m}^{n-1}-\mathfrak{m}^n$ for all $i$. We have
$\mbf{a}^\alpha\in\mathfrak{m}^{|\alpha|n-|\alpha|}$ but there is no
solution $\mbf{b}$ such that $\mbf{a}\equiv \mbf{b}$ (mod
$\mathfrak{m}^n$). Hence
$\beta_n(\mbf{X}^\alpha)>|\alpha|n-|\alpha|$ and therefore we have
the above equality.
\end{proof}


\begin{theorem} Let $(R,\mf{m})$ be a local domain. Let
$I=(\mbf{X}^{\alpha_1},\ldots,\mbf{X}^{\alpha_k})\subset R[X_1,\ldots, X_N]$
be a monomial ideal with $|\alpha_1|\geq |\alpha_2|\geq\ldots\geq |\alpha_k|$.

(i) If $(R,\mathfrak{m})$ is not analytically irreducible then the
Artin function of the polynomial $X_1X_2$ does not exist.

(ii) If $(R,\mathfrak{m})$ is an analytically irreducible domain
then there is $c$ such that $\beta_n(I)\leq cn$ for all $n\in N$.

(iii) If $(R,\mathfrak{m})$ is a regular local ring then
$\beta_n(I)=|\alpha_s|n-|\alpha_s|+1$ for every $n\in\mathbb{N}$,
where $s=\min\{i|\supp(\alpha_i) \not\supseteq \supp (\alpha_j)\text{ for every
}j>i\}$.
\end{theorem}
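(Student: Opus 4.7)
The three parts rest on different ideas, so I would handle them separately. Part (ii) is immediate: Lemma \ref{max} gives $\beta_n(I) \leq \max_i \beta_n(\mbf{X}^{\alpha_i})$, and Lemma \ref{1mono}(i) yields constants $c_i$ with $\beta_n(\mbf{X}^{\alpha_i}) \leq c_i n$, so $c := \max_i c_i$ works. For (i), the plan is to exhibit arbitrarily bad approximate zeros of $X_1 X_2$: since $R$ is a domain but $\widehat R$ is not, pick $\hat u, \hat v \in \widehat R\setminus\{0\}$ with $\hat u\hat v = 0$; by Krull's intersection theorem applied to $\widehat R$, both lie outside $\mathfrak{m}^{n_0}\widehat R$ for some fixed $n_0$. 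Approximating $\hat u,\hat v$ modulo $\mathfrak{m}^N\widehat R$ by elements $u_N, v_N \in R$, we obtain $u_N v_N \in \mathfrak{m}^N\widehat R\cap R = \mathfrak{m}^N$ while $u_N, v_N \notin \mathfrak{m}^{n_0}$. Because $R$ is a domain, every true solution to $X_1 X_2 = 0$ has a zero coordinate, so no such solution can be congruent to $(u_N, v_N)$ modulo $\mathfrak{m}^{n_0}$; hence no finite value of $\beta_{n_0}$ can work, and the Artin function does not exist.

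For part (iii), the plan begins with the combinatorial observation behind the definition of $s$: for every $i < s$, iterating the failure of the defining condition (which terminates because indices are bounded by $k$) produces some $j \geq s$ with $\supp(\alpha_j) \subseteq \supp(\alpha_i)$. With this in hand, the upper bound $\beta_n(I) \leq |\alpha_s|n-|\alpha_s|+1$ proceeds as follows. Assume $\mbf{a}^{\alpha_i}\in\mathfrak{m}^{|\alpha_s|n-|\alpha_s|+1}$ for every $i$. For indices $i\geq s$ one has $|\alpha_i|\leq|\alpha_s|$, whence $\mbf{a}^{\alpha_i}\in\mathfrak{m}^{|\alpha_i|n-|\alpha_i|+1}$, and the argument of Lemma \ref{1mono}(ii) supplies $j_i\in\supp(\alpha_i)$ with $a_{j_i}\in\mathfrak{m}^n$. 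Put $T=\{j_i:i\geq s\}$; the combinatorial observation forces $T\cap\supp(\alpha_i)\neq\emptyset$ also for $i<s$. Then $\mbf{b}$ defined by $b_l=0$ for $l\in T$ and $b_l=a_l$ otherwise satisfies $\mbf{b}^{\alpha_i}=0$ for all $i$ and $\mbf{a}\equiv\mbf{b}\pmod{\mathfrak{m}^n}$.

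For the matching lower bound, I would take $a_l\in\mathfrak{m}^{n-1}\setminus\mathfrak{m}^n$ for $l\in\supp(\alpha_s)$ and $a_l=0$ otherwise. Regularity of $R$ makes order a valuation on the associated graded, so $\mbf{a}^{\alpha_s}$ has order exactly $|\alpha_s|(n-1)$ and is nonzero. By the defining property of $s$, $\supp(\alpha_i)\not\subseteq\supp(\alpha_s)$ for $i>s$, so $\mbf{a}^{\alpha_i}=0$; for $i<s$, either $\supp(\alpha_i)\not\subseteq\supp(\alpha_s)$ (again $\mbf{a}^{\alpha_i}=0$) or $\supp(\alpha_i)\subseteq\supp(\alpha_s)$, in which case $\mbf{a}^{\alpha_i}$ has order $|\alpha_i|(n-1)\geq|\alpha_s|(n-1)$. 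Hence $\mbf{a}^{\alpha_i}\in\mathfrak{m}^{|\alpha_s|n-|\alpha_s|}$ for all $i$. Any $\mbf{b}$ with $\mbf{a}\equiv\mbf{b}\pmod{\mathfrak{m}^n}$ still has $b_l\notin\mathfrak{m}^n$ (in particular $b_l\neq 0$) for $l\in\supp(\alpha_s)$, which makes $\mbf{b}^{\alpha_s}$ nonzero; hence no solution lies within $\mathfrak{m}^n$ of $\mbf{a}$, giving $\beta_n(I)>|\alpha_s|n-|\alpha_s|$.

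The step I expect to be trickiest is the support-combinatorics in the upper bound of (iii): one must see past the fact that Lemma \ref{1mono}(ii) is not directly applicable under the given hypothesis to the exponents $|\alpha_i|$ with $i<s$, and observe that those indices are handled automatically through the successors they dominate in support. Once this reduction is in place, the rest is essentially a packaging of ingredients already available in the preceding lemmas.
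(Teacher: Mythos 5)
Your proposal is correct and follows essentially the same route as the paper: part (i) via zero divisors in $\widehat{R}$ approximated from $R$, part (ii) via Lemma \ref{max} combined with Lemma \ref{1mono}(i), and part (iii) via the same upper/lower bound constructions built on the support-combinatorics of $s$. Your minor streamlinings (zeroing an explicit set $T$ of coordinates in the upper bound, and setting the coordinates outside $\supp(\alpha_s)$ to $0$ in the lower bound instead of choosing them deep in $\mathfrak{m}$) are harmless variants of the paper's argument, and in fact state precisely the observation that for $i<s$ there is some $t\geq s$ with $\supp(\alpha_t)\subseteq\supp(\alpha_i)$, which the paper uses as well.
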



\begin{proof}

(i) (cf \cite[Proposition 3.4]{swanson2}) Since $\widehat{R}$ is not a
domain, there are $\hat{u},\hat{v}\in\widehat{R}$ nonzero and
$\hat{u}\hat{v}=0$. By Krull's intersection theorem, there is $n$
such that $\hat{u},\hat{v}\notin\hat{\mathfrak{m}}^n$. Let
$\{u_k\},\{v_k\}$ be two sequences in $R$ that converge to
$\hat{u},\hat{v}$. Then there is $l$ such that
$u_k,v_k\notin\mathfrak{m}^n$ for all $k>l$. But $u_kv_k$ converges
to $uv=0$, so for each $m$ there is $k_m>l$ such that
$u_{k_m},v_{k_m}\in\mathfrak{m}^m$ but none of $u_{k_m},v_{k_m}$
belongs to $\mathfrak{m}^n$. Therefore the Artin function of
$X_1X_2$ does not exist.

(ii) Using Lemma~\ref{max} and Lemma~\ref{1mono} we have $\beta_n(I)\leq c^{N+\max_{1\leq i\leq k}|\alpha_i|}n$
for all $n$.

(iii) We will first show that
$\beta_n(\mbf{X}^{\alpha_1},\ldots,\mbf{X}^{\alpha_k})\leq
|\alpha_s|n-|\alpha_s|+1$. To prove this let $\mbf{a}\in R^N$ be
such that
$$(\mbf{a}^{\alpha_1},\ldots,\mbf{a}^{\alpha_k})\subseteq \mathfrak{m}^{|\alpha_s|n-|\alpha_s|+1}.$$
Since $|\alpha_s|n-|\alpha_s|+1\geq \beta_n({\mbf{X}^{\alpha_i}})$
for all $i\geq s$, we can find $\mbf{b}\in R^N$ such that
$\mbf{b}^{\alpha_i}=0$ for all $i\geq s$ and $\mbf{a}\equiv\mbf{b}$
mod $\mathfrak{m}^n$. For $i<s$ we have $\supp(\alpha_i)\supseteq
\supp(\alpha_s)$. Thus $\mbf{b}$ is also a solution for the whole
system $\{\mbf{X}^{\alpha_i}, i=1\ldots, k\}$.

 It remains to show that $\beta_n(\mbf{X}^{\alpha_1},\ldots,\mbf{X}^{\alpha_k})> |\alpha_s|n-|\alpha_s|$. For each $j\in\supp (\alpha_s)$
 choose $a_j\in\mathfrak{m}^{n-1}-\mathfrak{m}^n$. Since $\supp (\alpha_t)\not\subset\supp (\alpha_s)$ for every $t>s$,
 there is $j_t\in\supp (\alpha_t)-\supp (\alpha_s)$. Choose $ a_{j_{s+1}}=\ldots=a_{j_k}\in\mathfrak{m}^{|\alpha_s|(n-1)}$ arbitrarily.
 For $i\notin\{\supp (\alpha_s)\}\cup  \{j_{s+1},\ldots,j_k\}$ let $a_i$ be arbitrary. Then it is
clearly that
$(\mbf{a}^{\alpha_s},\ldots,\mbf{a}^{\alpha_k})\subseteq\mathfrak{m}^{|\alpha_s|(n-1)}$.
By the definition of $s$, for each $i<s$ there is $t\geq s$ such
that $\supp(\alpha_i)\supseteq\supp(\alpha_t)$. If
$\supp(\alpha_i)\supseteq\supp(\alpha_s)$ then
$\mbf{a}^{\alpha_i}\in\mathfrak{m}^{|\alpha_s|(n-1)}$ because
$|\alpha_i|\geq|\alpha_s|$. If
$\supp(\alpha_i)\supseteq\supp(\alpha_t)$ with $t\geq s+1$ then we
still have  $\mbf{a}^{\alpha_i}\in\mathfrak{m}^{|\alpha_s|(n-1)}$
since $j_t\in\supp(\alpha_t)$ and
$a_{j_t}\in\mathfrak{m}^{|\alpha_s|(n-1)}$. This says that
$(\mbf{a}^{\alpha_1},\ldots,\mbf{a}^{\alpha_k})\subseteq\mathfrak{m}^{|\alpha_s|(n-1)}$.
Then in this case for every true solution $\mbf{b}$ of the system
$\mbf{X}^{\alpha_1},\ldots,\mbf{X}^{\alpha_k}$ we have
$\mbf{a}\not\equiv\mbf{b}$ (mod $\mathfrak{m}^n$). Therefore we have
proved the converse inequality and then
$\beta_n(\mbf{X}^{\alpha_1},\ldots,\mbf{X}^{\alpha_k})=\beta_n(\mbf{X}^{\alpha_2},\ldots,\mbf{X}^{\alpha_k})=|\alpha_s|n-|\alpha_s|+1$.
\end{proof}


\subsection{Determinantal ideals}


We shall compute the Artin functions associated to any determinantal
ideal over a DVR and show that it is actually a linear function.


\begin{theorem}
Let $(R,t)$ be a discrete valuation ring. Let
$\mbf{X}=\{X_{ij}| i=1\ldots k,j=1\ldots l\}$ be a set of variables
over $R$ with $k\leq l$. Given $1\leq r\leq k$, denote by $I_r$ the ideal of $R[\mbf{X}]$
generated by all $r$-minors of the matrix $(X_{ij})_{i,j}$. Then for every $n\in\mathbb{N}$ we have
$$
\beta_n(I_r)=rn-r+1.
$$
\end{theorem}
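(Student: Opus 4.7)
The plan is to bracket the Artin function between $\beta_n(I_r) \leq rn-r+1$ and $\beta_n(I_r) > rn-r$ by exploiting the Smith normal form of matrices over the DVR $(R, t)$. Both directions rest on the standard observation that if $A$ is a $k \times l$ matrix over $R$, one can write $A = UDV$ with $U, V$ invertible over $R$ and $D$ a $k \times l$ diagonal matrix whose diagonal entries are $t^{d_1}, \ldots, t^{d_k}$ (setting $t^\infty = 0$) satisfying $d_1 \leq \cdots \leq d_k$, and that the ideal $I_r(A)$ of $r$-minors of $A$ then equals $(t^{d_1 + \cdots + d_r})$; indeed the minor ideal is invariant under row and column operations, and among the nonzero $r$-minors of $D$ (products of $r$ distinct diagonal entries) the one of smallest valuation is $t^{d_1 + \cdots + d_r}$.

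For the upper bound I would let $\mbf{a} = (a_{ij}) \in R^{kl}$ satisfy $I_r(\mbf{a}) \subseteq \mathfrak{m}^{rn-r+1}$ and take its Smith decomposition $A = UDV$. Then $d_1 + \cdots + d_r \geq rn - r + 1$, and monotonicity gives $rd_r \geq rn - r + 1$, hence $d_r \geq n$. Replacing the last $k - r + 1$ diagonal entries of $D$ by $0$ produces a diagonal matrix $D'$; then $\mbf{b}$ defined by $B = UD'V$ has rank at most $r - 1$, so all its $r$-minors vanish, while $A - B = U(D - D')V$ has all entries in $\mathfrak{m}^n$ because $D - D'$ does.

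For the lower bound I would exhibit $\mbf{a}$ with $a_{ii} = t^{n-1}$ for $1 \leq i \leq r$ and zero elsewhere. Its only nonzero $r$-minor is the top-left one, equal to $t^{r(n-1)} \in \mathfrak{m}^{rn-r}$, so $I_r(\mbf{a}) \subseteq \mathfrak{m}^{rn-r}$. If a solution $\mbf{b}$ existed with $\mbf{a} \equiv \mbf{b} \pmod{\mathfrak{m}^n}$, then the top-left $r \times r$ block of $B$ would have the form $t^{n-1}(I_r + tC)$ for some $C \in R^{r \times r}$, with determinant $t^{r(n-1)} \det(I_r + tC)$ of $t$-valuation exactly $r(n-1)$ (since $\det(I_r + tC) \equiv 1 \pmod t$ is a unit). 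This contradicts the vanishing of every $r$-minor of $B$, forcing $\beta_n(I_r) > rn - r$.

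The main step to get right is the formula $I_r(A) = (t^{d_1 + \cdots + d_r})$ over a DVR, but this is a routine consequence of the Smith normal form. With this identity in hand the arithmetic becomes transparent: because the $d_i$ are nondecreasing, the minor-ideal hypothesis forces $d_r \geq n$, which is precisely what is needed to truncate $D$ and produce a solution within $\mathfrak{m}^n$ of $\mbf{a}$, while the diagonal example with a single repeated entry $t^{n-1}$ on the first $r$ diagonal positions shows that this bound is sharp.
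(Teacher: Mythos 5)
Your argument is correct, and your upper bound goes by a genuinely different route than the paper's. The paper proves $\beta_n(I_r)\leq rn-r+1$ by induction on $r$: it pivots on an entry $a_{11}$ of minimal valuation, performs one step of Gaussian elimination to pass to the Schur-complement-type $(k-1)\times(l-1)$ matrix, applies the inductive hypothesis for $I_{r-1}$, and reassembles a rank-$\leq r-1$ matrix as a sum of the lifted solution and a rank-one matrix. You instead diagonalize once via the Smith normal form $A=UDV$ and use the identity $I_r(A)=(t^{d_1+\cdots+d_r})$ with $d_1\leq\cdots\leq d_k$; then $d_1+\cdots+d_r\geq rn-r+1$ forces $d_r\geq n$, and truncating $D$ to $D'$ (zeroing positions $r,\ldots,k$) gives $B=UD'V$ of rank at most $r-1$ with $A-B=U(D-D')V$ congruent to zero entrywise modulo $(t)^n$, since $(t)^n$ is an ideal and $U,V$ have entries in $R$. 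This avoids the induction entirely and makes the arithmetic transparent, at the cost of invoking the Smith normal form (which is itself proved by the same kind of pivoting the paper carries out by hand, so the two arguments are close relatives); the paper's version is more self-contained and generalizes more directly to settings where a full diagonalization is unavailable. Your lower bound is essentially identical to the paper's: the same test matrix with $a_{ii}=t^{n-1}$ for $i\leq r$, except that where the paper expands the determinant over permutations and compares valuations, you write the top-left block of a putative solution as $t^{n-1}(\mathrm{Id}+tC)$ and note $\det(\mathrm{Id}+tC)$ is a unit, which is a cleaner packaging of the same valuation count (and incidentally fixes the paper's typo $v(b_{11}\cdots b_{rr})=n(r-1)$, which should read $r(n-1)$). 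Only one cosmetic point: you use the symbol $I_r$ both for the determinantal ideal and for the identity matrix; keep those notations distinct.
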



\begin{proof}
We prove this theorem by induction on $r$. The case $r=1$ is
trivially true. Suppose we have proved for ideals generated by
$(r-1)$-minors of all matrices $(X_{ij})$ of any size. We shall
first prove that $\beta_n(I_r)\leq rn-r+1$. Let $\mbf{a}=(a_{ij})\in
R^{kl}$ be an approximate solution of order $rn-r+1$ of the defining
equations of $I_r$. That means
$$
I_r
\begin{pmatrix}
a_{11}&a_{12} & \ldots &a_{1l}\\
a_{21}& a_{22} & \ldots & a_{2l}\\
\vdots & \vdots & \vdots&\vdots\\
a_{k1} & a_{k2} & \ldots & a_{kl}\\
\end{pmatrix} \subset (t)^{rn-r+1}.
$$
Let $v$ be the (discrete) valuation associated to $(R,t)$ and
suppose $v(a_{11})=\min_{i,j}\{v(a_{ij}), a_{ij}\neq 0\}$.  We wish
to find a solution $\mbf{b}=(b_{ij})\in R^{kl}$ such that
$\mbf{a}\equiv\mbf{b}$ (mod $(t)^nR^{kl}$). If $v(a_{11})\geq n$
then clearly we can take $\mbf{b}=0$. Suppose $v(a_{11})\leq n-1$.
Note that we have $a_{ij}a_{11}^{-1}\in R$ for all $i,j$. Then we
have
\begin{eqnarray*}I_r
&\begin{pmatrix}
1& a_{12}a_{11}^{-1}& \ldots & a_{1l}a_{11}^{-1}\\
a_{21}& a_{22} & \ldots & a_{2l}\\
\vdots & \vdots & \vdots&\vdots\\
a_{k1} & a_{k2} & \ldots & a_{kl}\\
\end{pmatrix}\subset (t)^{rn-r+1-v(a_{11})}\subseteq (t)^{(r-1)n-(r-1)+1}\\
\Longrightarrow & I_r
\begin{pmatrix}
1& a_{12}a_{11}^{-1}& \ldots & a_{1l}a_{11}^{-1}\\
0& a_{22}-a_{12}a_{21}a_{11}^{-1} & \ldots & a_{2l}-a_{1l}a_{21}a_{11}^{-1}\\
\vdots & \vdots & \vdots&\vdots\\
0 & a_{k2}-a_{12}a_{k1}a_{11}^{-1}& \ldots & a_{kl}-a_{1l}a_{k1}a_{11}^{-1}\\
\end{pmatrix}\subset (t)^{(r-1)n-(r-1)+1}\\
\Longrightarrow &I_{r-1}
\begin{pmatrix}
a_{22}-a_{12}a_{21}a_{11}^{-1} & \ldots & a_{2l}-a_{1l}a_{21}a_{11}^{-1}\\
a_{32}-a_{12}a_{31}a_{11}^{-1} & \ldots & a_{3l}-a_{1l}a_{31}a_{11}^{-1}\\
\vdots & \vdots&\vdots\\
a_{k2}-a_{12}a_{k1}a_{11}^{-1}& \ldots & a_{kl}-a_{1l}a_{k1}a_{11}^{-1}\\
\end{pmatrix}\subset (t)^{(r-1)n-(r-1)+1}.
\end{eqnarray*}
By induction there is $\mbf{b}^\prime=(b^\prime_{ij})_{i,j\geq 2}\in
R^{(k-1)(l-1)}$ such that
$$
I_{r-1}
\begin{pmatrix}
b^\prime_{22} & b^\prime_{23} & \ldots & b^\prime_{2l}\\
b^\prime_{32}& b^\prime_{33} & \ldots & b^\prime_{3l}\\
\vdots & \vdots & \vdots&\vdots\\
b^\prime_{k2} & b^\prime_{k3} & \ldots & b^\prime_{kl}\\
\end{pmatrix} =0,
$$
and $a_{ij}-a_{1j}a_{i1}a_{11}^{-1}\equiv b^\prime_{ij}$ (mod
$(t)^n$) for all $i,j\geq 2$. Define
$$
b_{ij}=
\begin{cases}
a_{ij} \text{ if } i=1 \text{ or } j=1\\
b^\prime_{ij} + a_{1j}a_{i1}a_{11}^{-1} \text{ if } i,j > 1.
\end{cases}
$$
It is clearly that $a_{ij}\equiv b_{ij}$ (mod $(t)^n$) for all
$i,j$. We claim that $\mbf{b}=(b_{ij})_{i,j}$ is a solution of
$I_r$. Indeed,
$$
\mbf{b}=
\begin{pmatrix}
a_{11}&a_{12} & \ldots & a_{1l}\\
a_{21}& b^\prime_{22}+a_{12}a_{21}a_{11}^{-1} & \ldots & b^\prime_{2l}+a_{1l}a_{21}a_{11}^{-1}\\
a_{31}& b^\prime_{32}+a_{12}a_{31}a_{11}^{-1} & \ldots & b^\prime_{3l}+a_{1l}a_{31}a_{11}^{-1}\\
\vdots&\vdots & \vdots&\vdots\\
a_{k1}& b^\prime_{k2}+a_{12}a_{k1}a_{11}^{-1}& \ldots & b^\prime_{kl}+a_{1l}a_{k1}a_{11}^{-1}\\
\end{pmatrix}
$$
is the sum of two matrices
$$
\begin{pmatrix}
0 & 0 & 0& \ldots & 0\\
0 & b^\prime_{22} & b^\prime_{23} & \ldots & b^\prime_{2l}\\
0 & b^\prime_{32} & b^\prime_{33} & \ldots & b^\prime_{3l}\\
\vdots & \vdots & \vdots&\vdots\\
0 & b^\prime_{k2} & b^\prime_{k3} & \ldots & b^\prime_{kl}\\
\end{pmatrix}
\text{ and }
\begin{pmatrix}
a_{11} & a_{12} & \ldots & a_{1l}\\
a_{21} & a_{12}a_{21}a_{11}^{-1} & \ldots & a_{1l}a_{21}a_{11}^{-1}\\
a_{31} & a_{12}a_{31}a_{11}^{-1} & \ldots & a_{1l}a_{31}a_{11}^{-1}\\
\vdots & \vdots & \vdots&\vdots\\
a_{k1} & a_{12}a_{k1}a_{11}^{-1}& \ldots & a_{1l}a_{k1}a_{11}^{-1}\\
\end{pmatrix},
$$
which are of rank at most $r-2$ and $1$ (resp). Thus rank($\mbf{b}$)
is at most $r-1$, which means all of its $r$-minors are 0, as
desired.

Now we shall show that $\beta_n(I_r)> rn-r$. Consider the following
matrix
$$
\mbf{a}=(a_{ij})_{i,j}=
\begin{pmatrix}
t^{n-1} & 0 & \ldots & 0 & \ldots & 0\\
0 & t^{n-1} & \ldots & 0 &\ldots & 0 \\
\vdots & \vdots & \ldots & \vdots & \ldots & \vdots\\
0 & 0 & \ldots & t^{n-1} & \ldots & 0\\
\vdots & \vdots & \ldots & \vdots & \ldots & \vdots\\
0 & 0 & \ldots & . & \ldots & 0
\end{pmatrix}\in R^{kl},
$$
where $a_{ii}=t^{n-1}$ for $i=1,\ldots,r$ and $a_{ij}=0$ otherwise. It
is an approximate solution of order $(t)^{rn-r}$. Let
$\mbf{b}=(b_{ij})_{i,j}$ be any solution of $I_r$. If
$\mbf{a}\equiv\mbf{b}$ (mod $(t)^n$) then $v(b_{ij})\geq n$ for all
$i\neq j$, and $v(b_{ii})= n-1$. Consider the submatrix
$(b_{ij})_{1\leq i,j\leq r}$. It has the determinant
$$
\det (b_{ij})_{1\leq i,j\leq r}=b_{11}\ldots b_{rr}+\sum_{\delta\in S_r,\delta\neq id}(-1)^{sgn(\delta)}b_{1\delta(1)}\ldots b_{r\delta(r)},
$$
which is not equal to 0 since $v(b_{11}\ldots b_{rr})=n(r-1)$ and
$v(b_{1\delta(1)}\ldots b_{r\delta(r)})> n(r-1)$ for all
$\delta\neq$ id, a contradiction. Thus $\beta_n(I_r)> rn-r$. Hence
the equality $\beta_n(I_r)=rn-n+1$ follows.
\end{proof}


\begin{remark}\label{second_example} There is no hope to have the linearity of the Artin
functions of determinantal ideals over rings of dimension at least 3, due to a counterexample
by Rond in ~\cite{rond2}. However the question on the linearity of the Artin function of the
polynomial $XY-ZW$ over the power series ring in 2 variables $k[T_1,T_2]$ is still unknown.
\end{remark}


\noindent{\textbf{Acknowledgment.}} This project was started and
most of it was carried out when I was working under the
direction of Irena Swanson at New Mexico State University. I would
like to thank her for introducing me to the theory of Artin
approximation and for many helpful conversations and for her
ceaseless encouragement. I would also like to thank Paul C. Roberts and the referee for many
suggestions to improve the presentation of this paper.



\vspace{.3in}

\noindent \textsc{\small Department of Mathematics, University of Utah,
Salt Lake City, UT
84112}

\noindent {\small Email address: \texttt{dinh@math.utah.edu}.}



\begin{thebibliography}{99}
\bibitem{artin1} M. Artin, {\it Algebraic approximation of structures over complete local rings}, Publ. Math. IHES \textbf{36}, 1969, 23--58.

\bibitem{artin2} M. Artin, {\it Algebraic structure of power series rings}, Con. Math. \textbf{13}, 1982, 223--227.

\bibitem{artin3} M. Artin, {\it On the solutions of analytic
equations}, Invent. Math. \textbf{5}, 1968, 277-291.

\bibitem{delfino} D. Delfino, I. Swanson, {\it Integral closure of ideals in excellent local rings}, J. Algebra \textbf{187} (1997), no. 2, 422--445.

\bibitem{delfino'} D. Delfino, I. Swanson, Erratum to ``{\it Integral closure of ideals in excellent local rings}'', J. Algebra \textbf{274} (2004), 422--428.

\bibitem{elkik} P. Elkik, {\it Solutions d'\' equations a coefficients dans un anneau hens\'elien,} Ann. Scient. \'Ec. Norm. Sup. $4^e$ s\'erie, t.6, (1973), 553--604.

\bibitem{greenberg} M. Greenberg, {\it Rational points in henselian discrete valuation rings,} Publ. Math. IHES \textbf{23}, (1964), 563--568.

\bibitem{hickel} M. Hickel, {\it Fontion de Artin et germes de
courbes trac\'ees sur un germe d'espace analytique}, Amer. J. Math. \textbf{115} (1993), 1299--1334.

\bibitem{swanson2} R. Hubl, I. Swanson, {\it Discrete valuations centered on a local domains,} J. Pure. Appl. Algebra \textbf{161} (2001)
145--166.

\bibitem{lejeune} M. Lejeune-Jalabert, {\it Courbes trac\'ees sur un
germe d'hypersurface}, Amer. J. Math. \textbf{112} (1990), 525--568.

\bibitem{ogoma} T. Ogoma, {\it General N\'eron desingularization base on the idea of Popescu}, J. Algebra \textbf{167}, 1984, 57--84.

\bibitem{pfisterpopescu}  G. Pfister, D. Popescu, {\it Die strenge Approximationseigenschaft lokaler Ringe.} Invent. Math. \textbf{30} (1975), No. 2,
145--174.

\bibitem{popescu1} D. Popescu, {\it General N\'eron desingularization}. Nagoya Math. J. \textbf{100} (1985), 97--126.

\bibitem{popescu2} D. Popescu, {\it General N\'eron desingularization and approximation}. Nagoya Math. J. \textbf{104} (1986), 85--115.

\bibitem{rond1} G. Rond, {\it Sur la lin\'earit\'e de la fonction de Artin}, Ann. Sci. \'Ecole Norm. Sup. (4)  38  (2005),  no. 6, 979--988.

\bibitem{rond2} G. Rond, {\it Lemme d'Artin-Rees, th\'eor\`eme d'Izumi et fonction de Artin}, J. Algebra  299  (2006),  no. 1, 245--275.

\bibitem{rotthaus1} C. Rotthaus, {\it Rings with approximation property}, Math. Ann \textbf{287}, (1990) 455--466.

\bibitem{rotthaus2} C. Rotthaus, {\it On the approximation property of excellent rings,} Invent. Math \textbf{88}, (1987) 39--63.

\bibitem{spiva1} M. Spivakovsky, {\it A new proof of D. Popescu's theorem on smoothing of ring homomorphisms.} J. Amer. Math. Soc. \textbf{12} (1999), No. 2, 381--444.

\bibitem{spiva2} M. Spivakovsky, {\it Valuations, the linear Artin approximation theorem and convergence of formal functions}. Algebra and geometry (Santiago de Compostela, 1989),
\'Alxebra \textbf{54}, Univ. Santiago de Compostela, Santiago de
Compostela, 1990, 237--254.

\bibitem{swan} R. Swan, {\it N\'eron-Popescu desingularization}. Algebra and geometry (Taipei, 1995), Lect. Algebra Geom. \textbf{2}, Internat. Press, Cambridge, MA,
1998, 135--192.

\bibitem{swanson1} I. Swanson, {\it Linear equivalence of ideal topologies.} Math. Z \textbf{234} (2000) 755--775.


\end{thebibliography}
\end{document}